\documentclass[journal,twoside,web]{IEEEtran}
\usepackage{cite}
\usepackage{amsmath,amssymb,amsfonts}
\usepackage{amsthm}
\usepackage{textcomp}
\usepackage{algorithm}
\usepackage{algpseudocode}% http://ctan.org/pkg/algorithmicx
\usepackage{graphicx}
\usepackage{subcaption}
\usepackage{enumerate}
\usepackage{multirow}
\usepackage{hyperref}
\usepackage{epstopdf}
\allowdisplaybreaks

\algdef{SE}{Begin}{End}{\textbf{begin}}{\textbf{end}}

\newtheorem{theorem}{Theorem}[section]
\newtheorem{corollary}{Corollary}[theorem]
\newtheorem{lemma}[theorem]{Lemma}
\newtheorem{assumption}[theorem]{Assumption}

\def\BibTeX{{\rm B\kern-.05em{\sc i\kern-.025em b}\kern-.08em
    T\kern-.1667em\lower.7ex\hbox{E}\kern-.125emX}}
\begin{document}
\title{Dynamic Modeling and Parameter Estimation for\\Origami Structure Reconfiguration Process}
%\author{Changhuang Wan, \IEEEmembership{Fellow, IEEE}, Sixiong You, Gangshan JIng, and Ran Dai, \IEEEmembership{MemberFellow, IEEE}
\author{Yuto Tanaka, Ran Dai, and Mehran Mesbahi
	\thanks{Yuto Tanaka and Ran Dai are with the School of Aeronautics and Astronautics, Purdue University, West Lafayette, IN, 47907.
		Emails: {\tt\small tanaka9@purdue.edu, randai@purdue.edu}}
        \thanks{Mehran Mesbahi is with William E. Boeing Department of Aeronautics
    	and Astronautics, University of Washington, Seattle, WA, 98195. Email: {\tt \small mesbahi@uw.edu}}
        }
%\thanks{S. B. Author, Jr., was with Rice University, Houston, TX 77005 USA. He is 
%now with the Department of Physics, Colorado State University, Fort Collins, 
%CO 80523 USA (e-mail: author@lamar.colostate.edu).}
%\thanks{T. C. Author is with 
%the Electrical Engineering Department, University of Colorado, Boulder, CO 
%80309 USA, on leave from the National Research Institute for Metals, 
%Tsukuba, Japan (e-mail: author@nrim.go.jp).}}

\maketitle

\begin{abstract}
The reconfiguration of origami during the folding and unfolding process is governed through a sequence of panel deformations and hinge orientations. {To} develop an effective model for representing the reconfiguration process, this paper introduces planar straight-line graphs and a novel consensus protocol for reaching the target origami configuration. The convergence and stability properties of the proposed consensus protocol are subsequently analyzed. Furthermore, {to account for aggregate material and structural effects in the proposed consensus-based reconfiguration model, effective parameters embedded in the consensus protocol are identified from trajectory data using a fitting algorithm.}
Lastly, the effectiveness of the proposed modeling approach is shown using
simulations of the two-panel structure and the Kresling origami pattern reconfiguration process.
\end{abstract}
%
%\begin{IEEEkeywords}-Origami dynamics and reconfiguration; consensus protocol; parameter estimation; Kresling origami pattern
%\end{IEEEkeywords}

\section{Introduction}\label{s:introduction}
Origami, the traditional art of paper-folding, has recently become a multidisciplinary
framework for designing novel mechanisms and engineering structures.
Origami structural principles are used for example in robotics, biomedical devices, and aerospace engineering. From robotic systems \cite{rus2018spotlight} and foldable medical tools \cite{johnson2017fabricating} to compact satellite components \cite{hwang2020origami}, origami-based designs stand out for their portability, adaptability, and efficiency \cite{rus2018design,fei2013origami}. As the range of these applications expand, accurately modeling and predicting how origami structures behave under dynamic conditions has become essential.

Modeling approaches for origami systems generally fall into two categories: mechanics-based models,  built on structural and material principles, and data-driven models that rely on observed system behavior. Mechanics-based frameworks, such as bar-and-hinge models, are particularly effective in capturing the kinematic constraints and physical motions associated with folding structures \cite{liu2017bar, Filipov2017BarHinge, Woodruff2020BarHinge}. However, these models typically require detailed prior knowledge of material properties, such as Young's modulus, to accurately represent the system's behavior during the reconfiguration process. In addition, while bar-and-hinge models effectively characterize geometric and mechanical transformations, they often lack a comprehensive analysis of the system’s dynamic properties, indispensable for investigating their time-evolution.

Data-driven modeling approaches--including those based on the finite element method, machine learning and numerical techniques--have shown strong predictive capabilities for complex systems \cite{Hu2021FEM, Mosh2022Fatigue, Zhang2023ML, Fonseca2022ML2}. However, these methods typically rely heavily on large volumes of high-quality data to capture the system dynamics accurately. As a result, the generalizability of the aforementioned methods is often limited to the range and conditions captured by the training data. Moreover, generating and preprocessing such data, particularly through high-fidelity simulations can be computationally expensive and time-consuming. 
By combining the motivation of exploring origami dynamic models with insights from data-driven methods, hybrid modeling approaches hold promise for creating more accurate and versatile representations of origami structures.

It is natural to represent origami structures using graph theory, where creases correspond to edges and crease intersections map to graph vertices. This connection has inspired a broad body of work at the intersection of origami and graph theory. On one hand, unit origami has served as an intuitive tool to teach fundamental graph theoretic concepts~\cite{hull1995unit}; on the other hand, graph-theoretic methods have been employed to support origami design, providing mathematical insights into geometric and topological properties of foldable structures \cite{dureisseix2015example,kanade1980theory,turner2016review,xia2024novel,yamaguchi2022graph}.
This paper presents a systematic approach to {model} the folding and unfolding processes of origami structures using dynamic networks and a new twist on the consensus protocol. %The network-based approach defines the origami system's elements as vertices and edges in a planar straight-line graph, where the relative motion of vertices is governed by this new consensus protocol. 

In an origami system, reconfiguration is achieved through panel deformations, namely expansion and contraction within individual two-dimensional (2D) planes and rotations along shared edges. However, the traditional consensus protocol adopted for three-dimensional (3D) space steers the coordinates of the vertices independently along each axis; {specifically}, the protocol does not restrict origami movement due to expansion/contraction within individual 2D panel surfaces \cite{mesbahi2010consensus,ren2008distributed,lou2014approximate}. Furthermore, the traditional consensus protocol cannot be applied directly to coordinate vertices for panel rotational movements around the hinges, {as illustrated by comparative examples in the following context}.

This work proposes a frame-projected consensus protocol (FPCP) to capture the complex and localized interactions between vertices during the origami reconfiguration process. This method projects each vertex into the frame of its corresponding panel, hence the name ``frame projection'' consensus. FPCP is employed in two distinct yet complementary forms: a triangulated consensus protocol for modeling panel deformations, and a hinge consensus protocol for capturing panel-to-panel rotational dynamics. The triangular consensus protocol defines the target configuration within each individual triangular panel surface, ensuring that vertex coordination, driven by panel deformation, occurs strictly within the local 2D frame of the panel. In parallel, the hinge consensus protocol specifies the desired equilibrium angle between adjacent panels and drives the shared vertices toward configurations that satisfy this target angular relationship, thereby capturing the panel rotational motion during the reconfiguration process. 
The proposed FPCP, which emphasizes geometric coordination within each panel surface and at inter-panel hinges, fundamentally differs from existing projection-based consensus protocols.  Without accounting for the localized, structure-preserving requirements intrinsic to the origami reconfiguration, existing projection-based consensus protocols typically operate under global state constraints, such as projecting agents onto convex sets or maintaining predefined feasibility regions~\cite{chen2017global,lou2014approximate,nedic2010constrained,azizan2019distributed}.

Multiple factors, for example, the properties of the origami structure, tessellation patterns, and panel sizes, affect the consensus speed of each vertex within the origami structure network \cite{yan2025mechanical,li2019architected,zhang2024deployment}. {To} accurately capture the influence of all contributing factors, a data-fitting approach has been adopted to estimate the weighting factors embedded in the FPCP. Specifically, by measuring the coordinates of observed vertices in a time sequence, weights incorporated into the FPCP are estimated using the parameter optimization method to match the observation as closely as possible. The weighted FPCP (namely W-FPCP), as observed from existing weighted consensus protocols \cite{mukherjee2018robustness,park2016weighted}, outperforms FPCP with uniform weights when representing the origami reconfiguration process, where vertices exhibit diverse dynamic behaviors.

Our prior work in \cite{acc2025} developed a dynamic model to represent the origami panel expansion/contraction process based on the triangulated consensus protocol. {To} model the entire system evolution, this work includes both panel deformation and hinge rotation using the unified FPCP approach. Furthermore, rigorous proof of the convergence of FPCP is provided.
The contributions of this work are as follows: (1) design of FPCP that captures the panel deformation process occurring within their respective 2D surfaces, while also accounting for the rotation effects of the hinges; (2) proof of convergence of the proposed FPCP; and (3) development of a data-fitting approach to incorporate system parameters into the proposed FPCP.

% \S II, preliminaries including the properties of origami structures, 3D to 2D projection conversion, and network components for origami structures are introduced. We then discuss the limitations of the traditional consensus protocol and introduce the proposed FPCP in \S III. \S IV incorporates the weighting parameters into FPCP and adopts the new consensus protocol for the entire origami structure with convergence analysis and parameter estimation procedure. 
% Simulation results are provided in \S V, followed by conclusions in \S VI.
{Section II presents the preliminaries,} including the properties of origami structures, the 3D-to-2D projection, and the network components associated with origami structures. {Section III} discusses the limitations of the traditional consensus protocol and introduces the proposed FPCP. {Section IV} incorporates weighting parameters into the FPCP{, extends the protocol to} the entire origami structure{, and presents the} convergence analysis and parameter estimation procedure. {Section V provides simulation results, and Section VI concludes the paper.}

\section{Preliminaries}\label{sec:Pre}
In this section, we provide a concise overview of origami structures and their graph representation.
\subsection{Origami Structure Properties}
Origami-inspired structures exhibit two types of motion primitives that define their mechanical behavior during the folding/unfolding process: panel deformation and panel rotation around hinges. These motions enable the structure to achieve complex folding and unfolding patterns while retaining certain levels of stability and adaptability.

The first type of movement, panel deformation, refers to the evolution of the panels to shrink or expand under external forces. This deformation is affected by material characteristics, geometric configuration, and boundary conditions of the panels. These factors dictate the flexibility and resilience of the structure, allowing it to adapt its shape in response to external stimuli while preserving its structural integrity.

The second type of motion, the rotation of the panels around the hinges, refers to the angular movement at the hinges that connect the panels; the angular motion is governed by the moments acting on the hinges. %, and its equilibrium angle ensures stability in different configurations. 
The equilibrium of these panel angles plays a critical role in determining how the structure transitions between folded and unfolded states. The interplay between the panel deformation and rotation defines the states of the overall origami structure during the reconfiguration process. %kinematics and mechanics of the origami structure.

%To model and replicate these movements, the framework of multiagent consensus protocol is introduced. In this approach, the panels are treated as agents, and the hinges represent their interactions or constraints. By carefully tuning the parameters of the consensus algorithm, the system's deformation and rotation process can be encoded. %and structural behavior, such as stiffness, flexibility, and angular constraints.

%This modeling approach allows for the accurate mimicry of origami structure properties, providing a means to design and control such systems with high precision. In the subsequent sections, we will discuss how the parameters within the consensus algorithm are chosen and optimized to reflect the desired properties of the material. This highlights the utility of consensus protocol in capturing the complex mechanics of origami-inspired structures for various applications.

\subsection{Graph Representation for Origami Structures}
An origami configuration can be modeled as a planar straight-line graph, where edges represent the creases and vertices represent the points where these creases intersect. The complete set of all creases in an origami structure, typically visualized in a single diagram, is called the crease pattern and is denoted as $\mathcal{C}$. The crease pattern $\mathcal{C}$ serves as a two-dimensional blueprint for the folding process, and it encodes the geometric information necessary for constructing the three-dimensional (3D) origami structure from a two-dimensional (2D) sheet. 

Consider a multi-agent system, where ${\mathbf{x}}_i(t) \in \mathbb{R}^3$ denotes the coordinate of agent $i$ at time $t$. %{In the present framework, $t$ denotes a continuous-time evolution parameter associated with the consensus dynamics. Although it is not explicitly identified with physical actuation time, it serves as a time-like variable that parameterizes the progression of the system during the reconfiguration process.} 
In our setting, each agent $i$ represents a vertex in a triangulated configuration consisting of three vertices. An origami structure can thus be regarded as a set of such triangulated configurations, denoted as $\mathcal{G} = (\mathcal{V}, \mathcal{E}) $, where $\mathcal{G}$ is a graph representing the connections between vertices corresponding to the creases of the origami,  $\mathcal{V} = \{1, 2, \ldots, n\} $  is the set of vertices representing the agents, and $  \mathcal{E} \subseteq \mathcal{V} \times \mathcal{V} $  is the set of edges representing the incident relation between agents, e.g., $\{i, j\}\in \mathcal{E}$ for agents $i$ and $j$. 
An example of the graph representation of Miura-ori pattern origami is shown in Fig. \ref{f:muri-ex}.

\begin{figure}[h!]
	\centering
 	\includegraphics[scale=0.08] 	
  {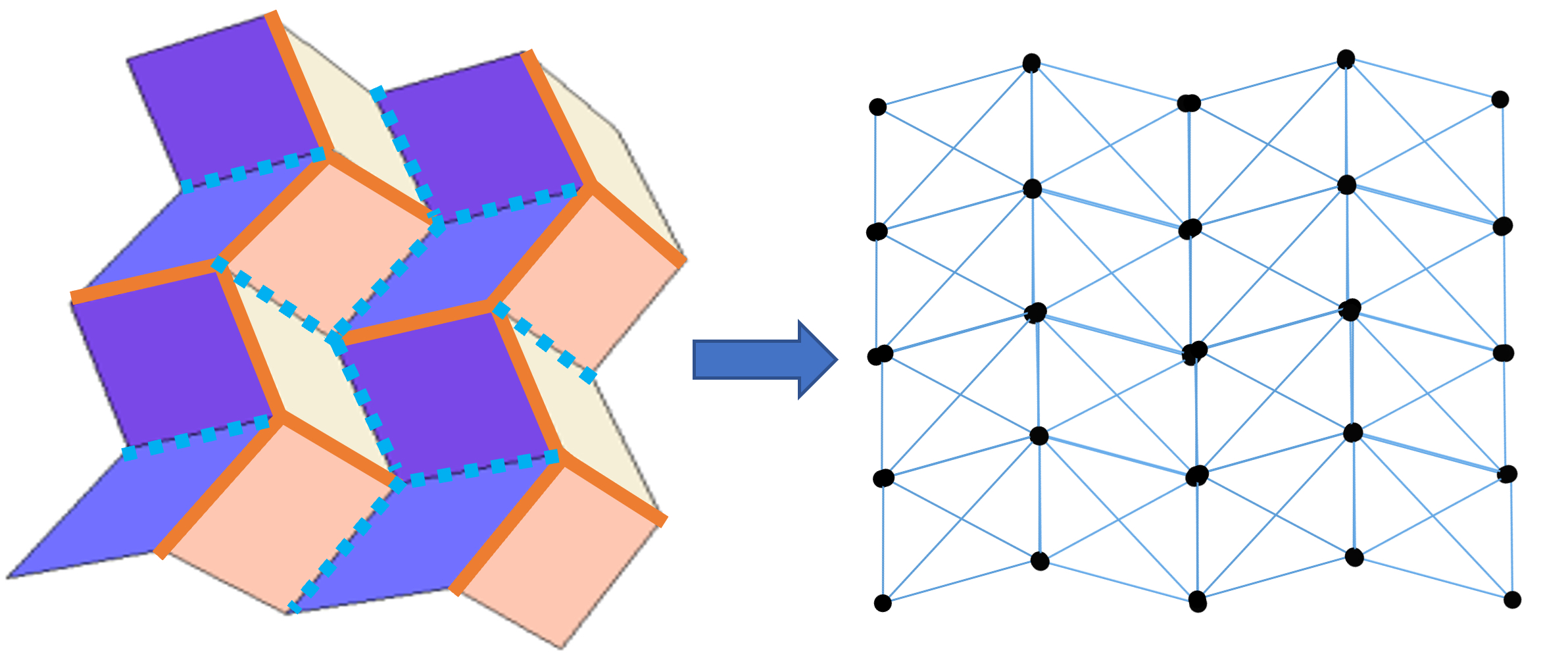}
  \caption{An example of Miura-ori origami structure and its graph representation, {where creases and crossbars represent edges, and nodes represent intersections of creases and crossbars}; the ``crossbars'' decompose the quadrilateral into triangular sets and are used to (subsequently) capture the bending behavior of each panel.}
	\label{f:muri-ex}
 \end{figure}

\subsection{3D to 2D Projection}
Projection from 3D space to a 2D plane is used in this work when modeling the system dynamics; 
as such, in this section we introduce the coordinate conversion for mapping a point from 3D to 2D. 
The projection of points in a 3D space into a 2D plane is a fundamental operation in many geometric 
and computational problems.

{
To define the projection, we introduce two orthonormal vectors 
$\mathbf{w}_1,\mathbf{w}_2 \in \mathbb{R}^{3}$ that span the local 2D plane. 
These vectors form a basis for the plane onto which the 3D coordinates are projected.
}
Together, these two normalized and mutually orthogonal vectors form the basis of the 2D plane, as well as the projection conversion matrix $M\in \mathbb{R}^{2\times3}$, written as
\begin{align}\label{e:proj_m_def}
    M =& \begin{bmatrix}
        \mathbf{w}_{1} & \mathbf{w}_{2}
        \end{bmatrix} ^{\top}.
\end{align}    
It thus follows that $MM^{\top}=I_2$, where $I_2\in\mathbb{R}^{2\times2}$ is the $2 \times 2$  identity matrix.
The projection conversion matrix constructed from these vectors serves as the foundation for mapping 3D points to the 2D plane, expressed as
\begin{equation} \label{e:3to2}
    \tilde{\mathbf{x}}_{\iota} = M \mathbf{x}_{\iota},
\end{equation}
where a 3D coordinate $\mathbf{x}_{\iota}\in \mathbb{R}^{3}$ is projected into a 2D plane with a coordinate $\tilde{\mathbf{x}}_{\iota}\in \mathbb{R}^{2}$.

\section{Frame Projection Consensus Protocol}\label{sec:triangulated_consensus}
Consensus protocol is widely utilized in distributed multi-agent systems to achieve agreement among agents on a common state. Consider a network defined by a vertex set $\mathcal{V}$ and edge set $\mathcal{E}$. The standard consensus protocol governing the dynamics of a system with $n$ agents can be expressed as
\begin{equation}\label{e:tradi_consensus}
   \dot{\mathbf{x}}(t)=-L(\mathcal{G}) \, \mathbf{x}(t),
\end{equation}
where $\mathbf{x}(t)=[x_1(t),x_2(t),\ldots,x_n(t)]^\top$ represents the states of the $n$ agents, and $L(\mathcal{G})$ denotes the graph Laplacian associated with the network. Extending this concept to formation control, specifically for a triangular formation defined by the complete graph ${\cal K}_3$, the consensus algorithm takes the form
\begin{equation}\label{e:formation}
\dot{\mathbf{x}}(t) = -L({\mathcal{K}}_3)(\mathbf{x}(t)-\mathbf{r}),
\end{equation}
where $ \mathbf{r} = [r_{ij}]$, for $(i, j) \in \mathcal{E}$, specifies the desired relative positions between the agents. Under this protocol, the agents' states asymptotically converge to the target formation, achieving $\lim_{t \to \infty} (x_i(t) - x_j(t)) = r_{ij}$, for every edge $(i, j) \in \mathcal{E}$.

However, when employing the traditional consensus protocol for guiding vertices toward a geometric formation, vertex coordination occurs independently along each axis in the 3D space. Consequently, such coordination does not inherently restrict panel deformation to the intended 2D panel surfaces,  failing to preserve the structural integrity during origami reconfiguration. Considering the limitations of traditional consensus protocols, we propose the FPCP that identifies the corresponding 2D projection planes where deformation or rotation of panels occurs. The consensus-based formation law is then operated within the correct geometric surface to preserve structural integrity during the origami reconfiguration.

\subsection{Triangulated Consensus Protocol}
This section introduces the triangulated consensus protocol to model the panel deformation process.
We start by assuming a connected triangular graph $ \mathcal{G} = (\mathcal{V}, \mathcal{E}) $ on three vertices ($n=3$) with coordinates ${\mathbf{x}}_{i, j, k}=[\mathbf{x}^\top_i,\mathbf{x}^\top_j,\mathbf{x}^\top_k]^\top$, where $\mathbf{x}_{\iota}\in \mathbb{R}^3$, and $\iota=i,j,k$ designate the indices of these vertices. 
To project a 3D coordinate to a 2D triangular panel surface, we employ the projection conversion matrix defined in \eqref{e:proj_m_def}. In this case, the projection conversion matrix is determined by 
\begin{equation}\label{eqn:project}
    M_{\text{panel}} = \begin{bmatrix} \frac{\mathbf{p}_{j, i}}{||\mathbf{p}_{j, i}||}  & \frac{(\mathbf{p}_{j, i} \times \mathbf{p}_{k, i}) \times \mathbf{p}_{j, i}}{||(\mathbf{p}_{j, i}\times \mathbf{p}_{k, i}) \times \mathbf{p}_{j, i}||}  \end{bmatrix} ^{\top} ,
\end{equation}
where $M_{\text{panel}}\in \mathbb{R}^{2\times 3}$, $\mathbf{p}_{\iota, \kappa} = \mathbf{x}_{\iota} - \mathbf{x}_{\kappa}$, $\iota,\kappa=i,j,k,\,\iota\neq\kappa$.
Then the 2D coordinates of the vertices in individual triangular panel surfaces, denoted as $\tilde{\mathbf{x}}_{i, j, k}\in \mathbb{R}^6$, can be computed via
\begin{equation}
\tilde{\mathbf{x}}_{i, j, k}=\tilde{M}_{\text{panel}} \, \mathbf{x}_{i, j, k} ,
\end{equation}
where $\tilde{M}_{\text{panel}} = I_{3} \otimes M_{\text{panel}}$,  $I_{3}\in \mathbb{R}^{3}$ is the $3 \times 3$ identity matrix, and `$\otimes$' denotes the Kronecker product.

The traditional consensus protocol is now applied to the projected coordinates on the 2D plane, with respect to the desired planar triangular configuration, denoted by $\tilde{\mathbf{r}}^{\text{panel}\top}_{i, j, k}=[\tilde{\mathbf{r}}^{\text{panel}\top}_i,\tilde{\mathbf{r}}^{\text{panel}\top}_j,\tilde{\mathbf{r}}^{\text{panel}\top}_k]^{\top}$, with $\tilde{\mathbf{r}}^{\text{panel}}_\iota\in \mathbb{R}^{2}$, $\iota=i,j,k$. Furthermore, the relative vector $\tilde{\mathbf{r}}^{\text{panel}}_j-\tilde{\mathbf{r}}^{\text{panel}}_i$ is aligned with $\mathbf{p}_{i, j}$ such that
\begin{equation}\label{eq:vector_r}
\tilde{\mathbf{r}}^{\text{panel}}_j-\tilde{\mathbf{r}}^{\text{panel}}_i=\alpha M_{\text{panel}}\mathbf{p}_{i, j},
\end{equation}
where $\alpha\in\mathbb{R}$ is a scalar. Lastly, we assume that $\tilde{\mathbf{r}}^{\text{panel}}_i$ overlaps with $M_{\text{panel}}\mathbf{x}_i$, i.e., $\tilde{\mathbf{r}}^{\text{panel}}_i=M_{\text{panel}}\mathbf{x}_i$. {The alignment and overlap simply choose vertex $i$ as the local reference point, so that the consensus law drives relative deformation rather than introducing an arbitrary in-plane shift.} 

Next, we apply the consensus protocol, in the context of formation control, to the projected coordinates of the origami panel. {For a triangular (complete) graph $K_3$, the consensus interaction among the vertices is represented through the graph Laplacian $L({\cal K}_3)$}. The consensus dynamics on the projected $2$D coordinates can therefore be written as
\begin{equation}
    \dot{\tilde{\mathbf{x}}}_{i, j, k} = -L_2 (\tilde{\mathbf{x}}_{i, j, k} - \tilde{\mathbf{r}}^{\text{panel}}_{i, j, k}),
\end{equation}
where $L_2 = L({\cal K}_3) \otimes I_{2}$. The velocity prescribed by the 2D consensus is then lifted back to the 3D space as,
\begin{equation}
    \dot{\mathbf{x}}_{i, j, k} = \tilde{M}_{\text{panel}}^{\top} \,\dot{\tilde{\mathbf{x}}}_{i, j, k}.
\end{equation}
In summary, the dynamics of the vertices based on the triangulated consensus assumes the form $\dot{\mathbf{x}}_{i, j, k} = f(\mathbf{x}_{i, j, k})$, where,
\begin{equation}\label{e:proj}
    f_{\text{panel}}(\mathbf{x}_{i, j, k}) = - \tilde{M}_{\text{panel}}^{\top} L_2 (\tilde{M}_{\text{panel}} \mathbf{x}_{i, j, k} - \tilde{\mathbf{r}}^{\text{panel}}_{i, j, k}) .
\end{equation}

Transformation from 3D to 2D coordinates using the projection matrix converts the origami panel representation from a global frame to its local frame. This approach enables capturing dynamic features that cannot be addressed using the traditional consensus control, namely, driving to a target formation specified within a 2D plane. 

{To} illustrate the working mechanism of the triangulated consensus protocol and its difference from traditional consensus, a comparative example is provided in Fig. \ref{fig:origami_demo1}.  Consider a triangulated graph in Fig. \ref{fig:origami_demo1}(a), consisting of a target formation (blue) and the initial configuration of the graph (red panel on the left side at $t=0$). According to the projection conversion matrix in \eqref{eqn:project}, the orthogonal vectors $\mathbf{w}_1$ and $\mathbf{w}_2$ are defined in Fig. \ref{fig:origami_demo1}(b). Using the triangulated consensus protocol, nodes assume the target formation on the original panel surface without changing the panel orientation, as shown in Fig. \ref{fig:origami_demo1}(c). However, when applying the traditional consensus protocol in \eqref{e:formation}, the coordination process is accompanied by a 3D rotation before converging to the target shape shown in Fig. \ref{fig:origami_demo1}(d). {Specifically,} the unexpected 3D rotation using the traditional consensus protocol fails to capture this important aspect of the origami dynamics during panel expansion/contraction.
\begin{figure}[h!]
	\centering
 	\includegraphics[scale=0.46] 	
  {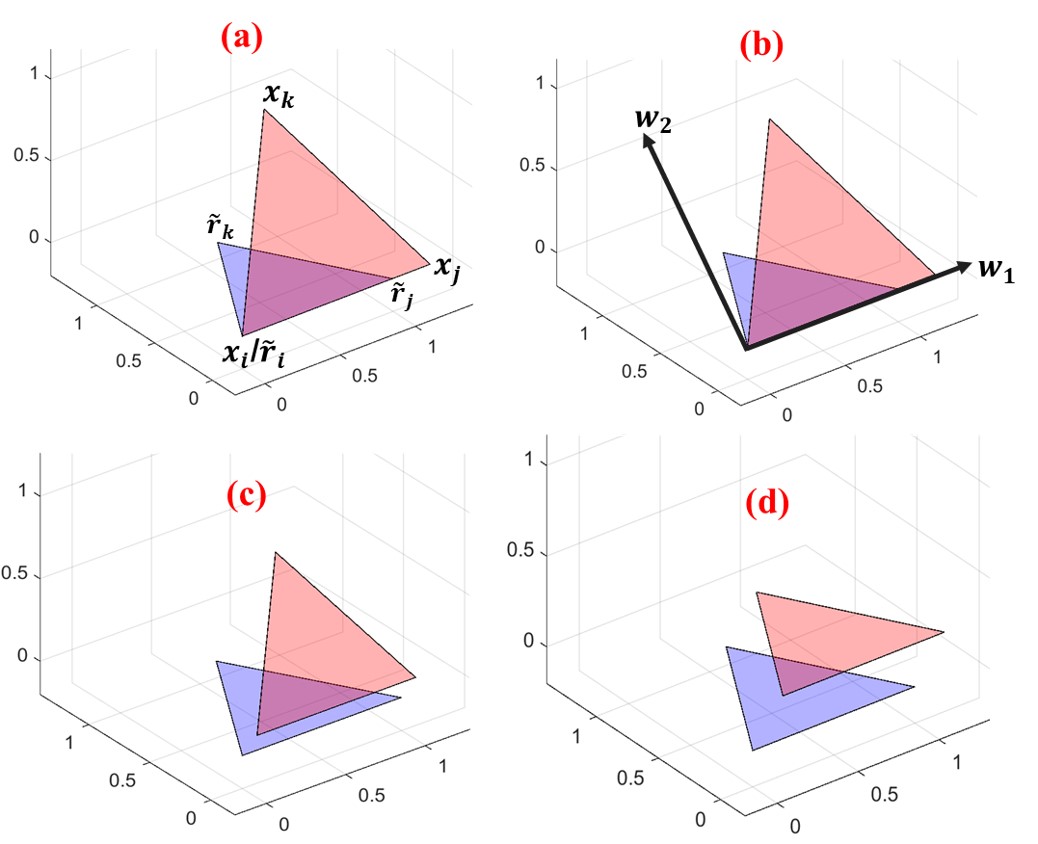}
  \caption{ {An example to model origami panel deformation by applying triangulated consensus protocol (a)$\rightarrow$(c), where consensus is applied to the local $(x_i,x_j,x_k)$ plane; and traditional consensus protocol (a)$\rightarrow$(d), where consensus is applied to the global 3D space.}}
	\label{fig:origami_demo1}
 \end{figure}

\subsection{Hinge Consensus Protocol}\label{ss:hinge_consensus}
Hinge consensus aims to guide the structure in attaining the desired angles between two adjacent panels. 
The FPCP is then applied to two projected surfaces defined by four vertices, denoted $i$, $j$, $k$, and $l$. As shown in {Fig. \ref{fig:hinge_demo}}, the system configuration can be visualized as a diamond. The angle between two panels, composed of $\{l,k,j\}$ and $\{i,k,j\}$, is denoted by $\phi\in (-\pi, \pi]$. {To} drive the rotation of the two panels toward the equilibrium angle $\phi$, two projection surfaces, composed of $\{i,k,l\}$ and $\{i,j,l\}$, are used to construct the two projection conversion matrices, denoted by $M_{\text{hinge1}}$ and $M_{\text{hinge2}}$, respectively. {These projection surfaces are chosen so that the hinge geometry is explicitly retained in the local construction: the hinge is the shared edge of the two adjacent panels, and the relevant motion is their relative rotation about that common edge. Accordingly, the selected surfaces serve as auxiliary geometric frames anchored to the hinge configuration, allowing the planar target formations to encode the desired inter-panel angle rather than an arbitrary in-plane deformation.} These matrices are defined as
\begin{subequations}\label{eq:hinge_map}
\begin{align}
    M_{\text{hinge1}} &= \begin{bmatrix}
        \frac{\mathbf{p}_{k, l}}{\|\mathbf{p}_{k, l}\|} & \frac{\mathbf{p}_{j, k} \times \mathbf{p}_{k, l}}{\|\mathbf{p}_{j, k} \times \mathbf{p}_{k, l}\|}
    \end{bmatrix}{^\top}, \\
    M_{\text{hinge2}} &= \begin{bmatrix}
        \frac{\mathbf{p}_{j, i}}{\|\mathbf{p}_{j, i}\|} & \frac{\mathbf{p}_{i, j} \times \mathbf{p}_{k, j}}{\|\mathbf{p}_{i, j} \times \mathbf{p}_{k, j}\|}
    \end{bmatrix}{^\top},
\end{align}
\end{subequations}
where $\mathbf{p}_{\iota, \kappa} = \mathbf{x}_{\iota} - \mathbf{x}_{\kappa}$, $\iota,\kappa=i,j,k,l,\,\iota\neq\kappa$. 
{The lifting in \eqref{eq:hinge_map} should not be interpreted as a unique inverse from arbitrary 2D motions to arbitrary 3D motions. Rather, for any local frame used in the proposed protocol, let $M_{\Gamma}\in\{M_{\mathrm{panel}},M_{\mathrm{hinge1}},M_{\mathrm{hinge2}}\}$. Once \(M_{\Gamma}\) is determined by the current local configuration, the projected consensus law prescribes an in-plane velocity in the corresponding local coordinates, and \(\tilde M_{\Gamma}^{\top}\) embeds this velocity back into the associated local subspace of \(\mathbb{R}^3\). Any component in \(\ker(M_{\Gamma})\), corresponding to the out-of-plane direction of the local frame, is therefore intentionally set to zero at this local update stage. Global compatibility across the origami structure is then enforced through the aggregation of local contributions at shared vertices in the full W-FPCP model discussed next.}
% {The lifting in \eqref{eq:hinge_map} should not be interpreted as a unique inverse from arbitrary 2D motions to arbitrary 3D motions. Rather, once the local panel frame is determined by $M_{\mathrm{panel}}$ at the current configuration, the projected consensus law prescribes an in-plane velocity in the corresponding panel coordinates, and $\tilde M_{\mathrm{panel}}^\top$ embeds this velocity back into the associated panel subspace of $\mathbb{R}^3$. Since $M_{\mathrm{panel}}M_{\mathrm{panel}}^\top=I_2$, this map provides the canonical in-plane 3D representative relative to the chosen panel frame, while any out-of-plane component is intentionally excluded at this local update stage. Global compatibility across the origami structure is then enforced through the aggregation of local contributions at shared vertices in the full W-FPCP model discussed next.}
\vspace{-0.3cm}
\begin{figure}[h!]
	\centering
 	\includegraphics[scale=0.5] 	
  {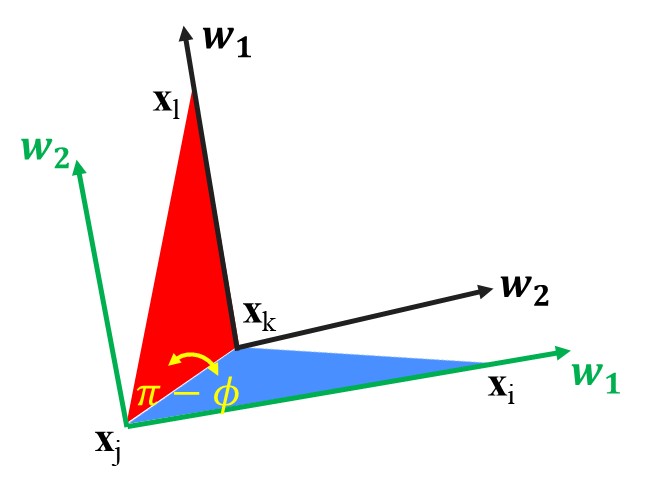}
  \vspace{-0.3cm}
  \caption{Illustration of settings in the hinge consensus protocol.}
	\label{fig:hinge_demo}
 \end{figure}

% The system's configuration can be visualized as a diamond, with the angles between the panels represented as triangular relationships between the vertices. An example is illustrated in Fig. \ref{fig:ex_hinge}, where the hinge, initially positioned at 90 degrees, attempts to return to its neutral angle of 180 degrees. There will exist $\{i,k,l\}$ side and $\{i,j,l\}$ side. Each side of the diamond configuration corresponds to a panel, and its equilibrium can be analyzed in a 2D formation. This will still follow the same idea as \eqref{e:proj}, expressed as
% \begin{subequations}
% \begin{align}
%     f(\mathbf{x}_{i, k, l}) = - \tilde{M}_{\text{hinge1}}^{\top} L_2 (\tilde{M}_{\text{hinge1}} \mathbf{x}_{i, k, l} - \tilde{\mathbf{r}}_{i, k, l}^{\text{hinge1}}) \\
%     f(\mathbf{x}_{i, j, l}) = - \tilde{M}_{\text{hinge2}}^{\top} L_2 (\tilde{M}_{\text{hinge2}} \mathbf{x}_{i, j, l} - \tilde{\mathbf{r}}_{i, j, l}^{\text{hinge2}})
% \end{align}
% \end{subequations}

After defining the two projection surfaces and their corresponding projection conversion matrices, the next step is to define the target formation, denoted by $\tilde{\mathbf{r}}_{i, k, l}$ and $\tilde{\mathbf{r}}_{i, j, l}$, respectively, creating a triangular shape with the target angle $\phi$ on each projection surface. To do so, we first use the equilibrium angle between the two panels to construct a rotation matrix from Rodrigues' rotation formula, expressed as
\begin{equation}
\begin{split}
\Phi_\phi &= I_3 \cos{\phi} + \left[\frac{\mathbf{q}_{k,j}}{\left\| \mathbf{q}_{k,j}\right\|} \right]_\times \sin{\phi} \\[6pt]
          &\quad + \left(\frac{\mathbf{q}_{k,j}}{\left\| \mathbf{q}_{k,j}\right\|} \right)\left(\frac{\mathbf{q}_{k,j}}{\left\| \mathbf{q}_{k,j}\right\|} \right)^\top (1-\cos{\phi}),
\end{split}
\end{equation}
where {$[\cdot]_{\times}$} denotes the skew‐symmetric matrix and $\mathbf{q}_{\iota,\kappa} \in \mathbb{R}^{3}$ is a desired vector from vertex $\iota$ to $\kappa$ at degree $\phi = 0$ {for simplicity to represent the structure on a 2D plane to allow for $\phi\in (-\pi, \pi]$}.

The positions of the target formation in the 2D projection planes are now defined as
\begin{subequations}
\begin{align}
    \begin{split}
\tilde{\mathbf{r}}_{i}^{\text{hinge1}}
&= 
  \left[
    \left\langle \frac{\mathbf{q}_{k, l}}{\|\mathbf{q}_{k, l}\|},\,\Phi_\phi\mathbf{q}_{i, j} - \mathbf{q}_{k, j} \right\rangle
  \right.\\
&\quad
  \left.
    \left\langle \frac{-\mathbf{q}_{k,j} \times \mathbf{q}_{k,l}}{\|-\mathbf{q}_{k,j} \times \mathbf{q}_{k,l}\|},\,\Phi_\phi\mathbf{q}_{i, j} - \mathbf{q}_{k, j} \right\rangle
  \right]^{\!\top},
\end{split}\\
    \tilde{\mathbf{r}}_{k}^{\text{hinge1}} &= \begin{bmatrix}
        0 & 0
    \end{bmatrix}^{\top}, \\
    \tilde{\mathbf{r}}_{l}^{\text{hinge1}} &= \begin{bmatrix}
        -\|\mathbf{q}_{k, l}\| & 0
    \end{bmatrix}^{\top}, \\
    \tilde{\mathbf{r}}_{i}^{\text{hinge2}} &= \begin{bmatrix}
        -\|\mathbf{q}_{j, i}\| & 0
    \end{bmatrix}^{\top}, \\
    \tilde{\mathbf{r}}_{j}^{\text{hinge2}} &= \begin{bmatrix}
        0 & 0
    \end{bmatrix}^{\top}, \\
    \tilde{\mathbf{r}}_{l}^{\text{hinge2}} &= \begin{bmatrix}
    -\left\langle \frac{\Phi_\phi\mathbf{q}_{i, j}}{\|\Phi_\phi\mathbf{q}_{i, j}\|}, \mathbf{q}_{l, j} \right\rangle & \left\langle \frac{\Phi_\phi\mathbf{q}_{i, j} \times \mathbf{q}_{k, j}}{\| \Phi_\phi\mathbf{q}_{i, j} \times \mathbf{q}_{k, j} \|}, \mathbf{q}_{l, j} \right\rangle
    \end{bmatrix}^{\top}.
\end{align}
\end{subequations}
{where $\langle \cdot, \cdot \rangle$ denotes the inner product between two vectors. Each vector defines the position of an element in the target formation, expressed in the local 2D plane corresponding to the side view of the two panels for the given angle $\phi$.}

Then, the target triangular formation is determined by
\begin{subequations} \label{e:hinge_desired}
\begin{align}
    \tilde{\mathbf{r}}_{i, k, l}^{\text{hinge1}} &= \begin{bmatrix}
         \tilde{\mathbf{r}}_{i}^{\text{hinge1}\top}  & \tilde{\mathbf{r}}_{k}^{\text{hinge1}\top} & \tilde{\mathbf{r}}_{l}^{\text{hinge1}\top}
    \end{bmatrix}^{\top}, \\
    \tilde{\mathbf{r}}_{i, j, l}^{\text{hinge2}} &= \begin{bmatrix}
         \tilde{\mathbf{r}}_{i}^{\text{hinge2}\top} & \tilde{\mathbf{r}}_{j}^{\text{hinge2}\top} &  \tilde{\mathbf{r}}_{l}^{\text{hinge2}\top} 
    \end{bmatrix}^{\top}.
\end{align}
\end{subequations}
The definition of the target formation $\tilde{\mathbf{r}}^{\text{hinge1}}_{i, k, l}$ ensures that the angle between vectors $\tilde{\mathbf{r}}^{\text{hinge1}}_{i}-\tilde{\mathbf{r}}^{\text{hinge1}}_{{k}}$ and $\tilde{\mathbf{r}}^{\text{hinge1}}_{l}-\tilde{\mathbf{r}}^{\text{hinge1}}_{{k}}$ is the desired angle $\phi$. Meanwhile, the definition of target formation $\tilde{\mathbf{r}}^{\text{hinge2}}_{i, j, l}$ ensures that the angle between vectors $\tilde{\mathbf{r}}^{\text{hinge2}}_{i}-\tilde{\mathbf{r}}^{\text{hinge2}}_{{j}}$ and $\tilde{\mathbf{r}}^{\text{hinge2}}_{l}-\tilde{\mathbf{r}}^{\text{hinge2}}_{{j}}$ is the desired angle $\phi$ as well.

Next, we apply the same consensus protocol in each projection plane, written as
\begin{subequations}\label{e:hinge_con}
\begin{align}
    f_{\text{hinge1}}(\mathbf{x}_{i, k, l}) = - \tilde{M}_{\text{hinge1}}^{\top} L_2 (\tilde{M}_{\text{hinge1}} \mathbf{x}_{i, k, l} - \tilde{\mathbf{r}}_{i, k, l}^{\text{hinge1}}), \\
    f_{\text{hinge2}}(\mathbf{x}_{i, j, l}) = - \tilde{M}_{\text{hinge2}}^{\top} L_2 (\tilde{M}_{\text{hinge2}} \mathbf{x}_{i, j, l} - \tilde{\mathbf{r}}_{i, j, l}^{\text{hinge2}}).
\end{align}
\end{subequations}
The formation coordination in both projection planes will drive the two panels $\{l,k,j\}$ and $\{i,k,j\}$ toward the target angle $\phi$. One example is shown in Fig. \ref{fig:ex_hinge}, where the hinge, initially positioned at 90 degrees, attempts to converge to its equilibrium angle of 180 degrees as $t\rightarrow\infty$.

%The desired reference position is inherently more complex than the corresponding triangulated consensus protocol due to the spatial considerations at the hinges. Initially, the values must account for the hinges being in a flat configuration, which is represented in a 2D plane. If the reference positions in this flat configuration are denoted by $\tilde{\mathbf{s}}_{\iota}$ for $\iota = i, j, k, l$, then the relative position between two vertices is given by $\tilde{\mathbf{q}}_{\iota, \kappa} = \tilde{\mathbf{s}}_{\iota} - \tilde{\mathbf{s}}_{\kappa}$.
%
\begin{figure}[h!]
    \centering
    \includegraphics[width=1\linewidth]{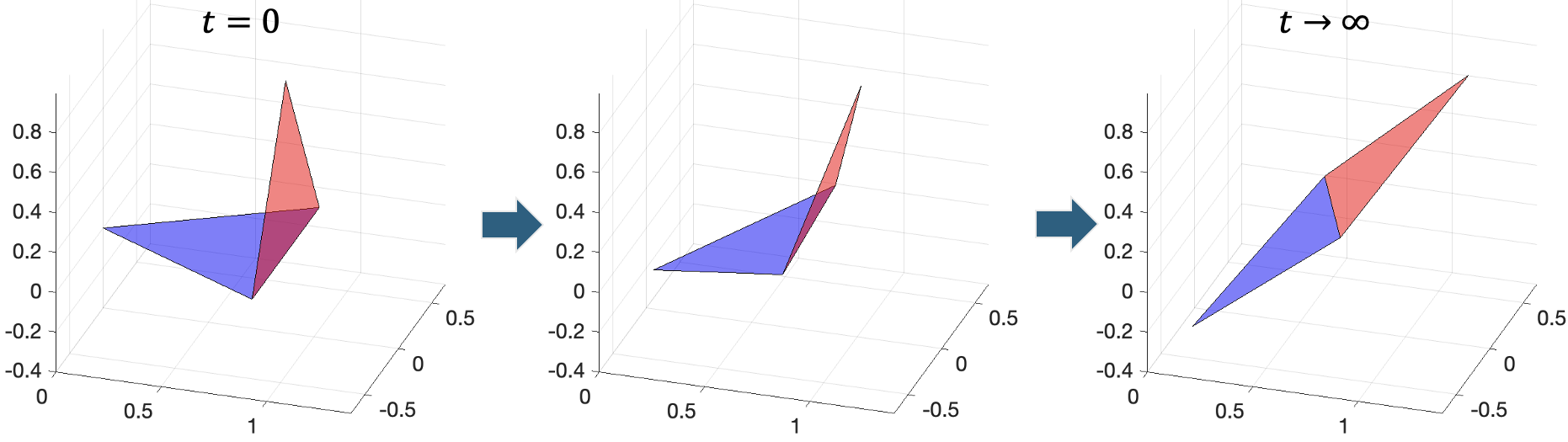}
    \caption{An example of applying the hinge consensus protocol to origami panel rotations.}
    \label{fig:ex_hinge}
\end{figure}

\section{Weighted Frame Projected Consensus for Origami Dynamics}\label{s:weighted}
% \subsection{Weighted Frame Projected Consensus Protocol}
The triangulated consensus and hinge consensus protocols introduced above are applicable to a single panel deformation and panel rotations around one hinge, respectively. In meantime, factors that influence the speed of each vertex during the reconfiguration have not been considered in these protocols. The weighted frame projection consensus protocol (W-FPCP) is a framework designed to apply the FPCP to the entire origami system while incorporating parameters that capture each panel's deformation properties and
enhance the modeling fidelity of the protocol. % and structural characteristics. 
{In particular, the weighted consensus protocol assigns effective interaction coefficients to the edges of the triangular graph to improve trajectory reproduction while accounting, in an aggregate phenomenological sense, for effects associated with strain-energy-related behavior, stiffness variation, hinge resistance, and material heterogeneity. These weights are not introduced as direct one-to-one physical stiffness or damping parameters with prescribed mechanical units derived from a first-principles model; instead, they are treated as fitted effective parameters within the proposed consensus-based reconfiguration model.} By embedding these parameters into the reconfiguration process, the model not only achieves higher accuracy and adaptability, but also allows for the representation of diverse origami configurations. %This weighted approach serves as a foundation for exploring how material properties influence the overall dynamics and behavior of the structure during its reconfiguration. 
The weighted Laplacian, denoted $\Omega({\cal K}_3)\in\mathbb{R}^{3 \times 3}$, for the triangular graph is used to replace the unweighted Laplacian $L({\cal K}_3)$, expressed as
\begin{equation}
\Omega({\cal K}_3)= \begin{bmatrix}
\omega_{i,j}+\omega_{i,k} &-\omega_{i,j} &-\omega_{i,k}\\
-\omega_{i,j} &\omega_{i,j}+\omega_{j,k} &-\omega_{j,k}\\
-\omega_{i,k}& -\omega_{j,k} &\omega_{i,k}+\omega_{j,k}
\end{bmatrix} ,
\end{equation}
where $\omega_{\iota,\kappa}={\omega_{\kappa,\iota}}\ge 0$, $\iota,\kappa=i,j,k$, $\iota\neq\kappa$, are weights of the undirected graph corresponding to vertices $\iota$ and $\kappa$. %{The consensus weights can be interpreted as effective interaction coefficients that reflect structural properties such as stiffness and hinge resistance within the origami system.} 
{To} extend the weighted Laplacian to a triangular graph on a 2D plane, analogous to $L_2$ in \eqref{e:proj}, we define $\Omega_{i,j,k}\in \mathbb{R}^{6\times6}$, written as 
\begin{equation}\label{e:weights}
\begin{aligned}
\!\!\Omega_{i,j,k}= &\left[\begin{matrix}
\omega_{i,j}+\omega_{i,k} & 0 &-\omega_{i,j} \\
0 & \gamma_{i,j}+\gamma_{i,k}&0 \\
-\omega_{i,j} &0 &\omega_{i,j}+\omega_{j,k}\\
0 &-\gamma_{i,j} &0 \\
-\omega_{i,k} &0 & -\omega_{j,k} \\
0 & -\gamma_{i,k} & 0\\ 
\end{matrix}\right.\\
&\qquad\;\;
\left.\begin{matrix}
0 &-\omega_{i,k} &0\\
-\gamma_{i,j} &0 &-\gamma_{i,k}\\
0 &-\omega_{j,k}&0\\
\gamma_{i,j}+\gamma_{j,k}&0 &-\gamma_{j,k}\\
0 & \omega_{i,k}+\omega_{j,k} &0\\
-\gamma_{j,k} &0 & \gamma_{i,k}+\gamma_{j,k}
\end{matrix}\right] ,
\end{aligned}
\end{equation}
where elements $\omega_{\iota,\kappa}\ge0$ and $\gamma_{\iota,\kappa}\ge0$, $\iota,\kappa=i,j,k$, $\iota\neq\kappa$, denote weights along the two coordinates on a 2D plane, respectively. {Since the graph is undirected, we assume $\omega_{\iota,\kappa}=\omega_{\kappa,\iota}$ and $\gamma_{\iota,\kappa}=\gamma_{\kappa,\iota}$ so that $\Omega_{i,j,k}$ is symmetric and positive semidefinite.} The approach to determining these weights is addressed in \S V to capture variations of panel properties. With the introduced weighted Laplacian representing weights for a triangular graph on a 2D plane, the frame projected consensus is then updated via
\begin{equation}\label{eqn:weighted_consensus}
    \dot{\mathbf{x}}_{i, j, k} = -\tilde{M}_{i,j,k}^{\top} \Omega_{i,j,k}\tilde{M}_{i,j,k} \left(\mathbf{x}_{i, j, k} - \tilde{M}_{i,j,k}^ {\top}\tilde{\mathbf{r}}_{i, j, k}\right),
\end{equation}
where $\tilde{M}_{i,j,k} = I_{3} \otimes M_{i,j,k} \in\mathbb{R}^{6\times9}$ and $M_{i,j,k}\in \mathbb{R}^{2\times3}$ is the projection conversion matrix for the 2D panel composed by vertices $\{i,j,k\}$.

\subsection{W-FPCP for Entire Origami Structure}
The W-FPCP is applied individually to each panel deformation or rotation around the hinge above, ensuring that the dynamics of local interactions are accurately captured. {By }accounting for the {cumulative} contributions of each panel's deformation and rotation about every hinge, we synthesize the collective dynamics of the entire origami structure. The resulting comprehensive model accounts for the interplay between panels and hinges, as well as their shared constraints. %This approach ensures that the structure's overall behavior emerges from its constituent parts' coordinated contributions. 
Let the index of $m$ panels and $h$ hinges be denoted by $\Gamma = 1, ..., m+2h$ {as each hinge consensus protocol requires  consensus coordination twice as discussed in Section~\ref{ss:hinge_consensus}}; the overall system dynamics is now written as 
\begin{equation}\label{e:weighted_alltogether}
    \dot{\mathbf{x}} = \sum_{\Gamma=1}^{m+2h} (S_{i,j,k} \otimes I_{3}) \, \dot{\mathbf{x}}^{(\Gamma)}_{i, j, k},
\end{equation}
where $\mathbf{x}\in\mathbb{R}^{3\nu}$ is the stacked state vector for $\nu$ vertices within the entire origami structure, $S_{i,j,k} \in \mathbb{R}^{\nu \times 3}$ is a selection matrix with all entries set to zero except $S_{i,j,k}(i,1)=S_{i,j,k}(j,2)=S_{i,j,k}(k,3)=1$. The selection matrix is required to align the local vertices $\mathbf{x}^{(\Gamma)}_{i, j, k}$ in every triangular graph with the corresponding vertices in the stacked state vector $\mathbf{x}$. For each formation (panel or hinge) $\Gamma = 1, \dots, m+2h$, the corresponding graph is $\mathcal{G}^{(\Gamma)} = (\mathcal{V}^{(\Gamma)}, \mathcal{E}^{(\Gamma)})$, where $\mathcal{V}^{(\Gamma)} = \{i, j, k\}$ denotes the three vertices of the triangular graph and $\mathcal{E}^{(\Gamma)}$ represents the edges that connect them.
The term $\dot{\mathbf{x}}^{(\Gamma)}_{i, j, k}$ follows the W-FPCP in \eqref{eqn:weighted_consensus} for panel $\Gamma$. Using \eqref{e:weighted_alltogether}, the dynamics of the entire origami structure can be determined by accumulating the contributions of each local set of triangular vertices.  {Because adjacent panels share common vertices in the aggregated formulation, the motion of each such vertex is influenced by all incident local contributions, which enforces kinematic consistency at the shared-vertex level throughout the evolution. As a result, the global structure connectivity is guaranteed by the the shared-vertex aggregation formulation.}

The traditional consensus protocol in 
 \eqref{e:tradi_consensus} operates as a linear system governed by a more streamlined relationship between the states and their evolution. However, after incorporating the projection conversion matrix, the dynamics becomes nonlinear, 
 introducing new types of dynamic phenomena. {T}o obtain a compact form of the corresponding
 system dynamics, a series of conversions is introduced below to transform the system dynamics into a state-space representation.

For every $\dot{\mathbf{x}}^{(\Gamma)}_{i, j, k}$, $\Gamma = 1, \dots, m+2h$, expressed in \eqref{e:weighted_alltogether}, we rewrite it as 
\begin{equation}\label{eqn:weighted_consensus2}
    \dot{\mathbf{x}}^{(\Gamma)}_{i, j, k} = -W_{i,j,k} \mathbf{x}_{i, j, k}^{(\Gamma)} - W_{i,j,k}\mathbf{b}_{i,j,k},
\end{equation}
where $W_{i,j,k}=\tilde{M}_{i,j,k}^{\top} \Omega_{i,j,k}\tilde{M}_{i,j,k}\in \mathbb{R}^{9\times 9}$, $\mathbf{b}_{i,j,k}=-\tilde{M}^{\top}_{i,j,k}\tilde{\mathbf{r}}_{i, j, k}^{(\Gamma)}\in \mathbb{R}^{9}$ {for all} $\mathcal{V}^{(\Gamma)} = \{i,j,k\}$. To account for the accumulative effect from every $\dot{\mathbf{x}}^{(\Gamma)}_{i, j, k}$, $\Gamma = 1, \dots, m+2h$, to the stacked state $\mathbf{x}$, $W_{i,j,k}$ is expanded to $\hat{W}_{i,j,k}\in \mathbb{R}^{3\nu\times 3\nu}$ by first decomposing $W_{i,j,k}$ into
\begin{equation}
W_{i,j,k} = \sum^{3}_{\iota = 1} \sum^{3}_{\kappa = 1} G_{\iota, \kappa} \otimes W_{\iota, \kappa},
\end{equation}
where selection $G_{\iota, \kappa} \in \mathbb{R}^{3\times3}$ with all entries set to zero except $G_{\iota, \kappa}(\iota, \kappa) = 1$ and  $W_{\iota, \kappa} \in \mathbb{R}^{3\times3}$ is the affect of consensus on agent $\iota$ on $\kappa$. From this, we expand the stacked state $\mathbf{x}$. {The local interaction matrix $W_{i,j,k}$ can be embedded into the global system dynamics using a Kronecker-product representation.} Thus, $W_{i,j,k}$ is expanded to $\hat{W}_{i,j,k}\in \mathbb{R}^{3\nu\times 3\nu}$, written as
\begin{equation}\label{e:all_weights}
\hat{W}_{i,j,k} = \sum_{\iota \in \mathcal{V}^{(\Gamma)}} \sum_{\kappa \in \mathcal{V}^{(\Gamma)}} \hat{G}_{\iota,\kappa} \otimes W_{\iota, \kappa},
\end{equation}
where $\hat{G}_{\iota, \kappa} \in \mathbb{R}^{\nu \times \nu}$ is a selection matrix with all entries set to zero except $\hat{G}_{\iota, \kappa}(\iota, \kappa) = 1$. Then $\mathbf{b}_{i,j,k}$ is expanded to $\hat{\textbf{b}}_{i,j,k}\in \mathbb{R}^{3\nu}$, written as 
\begin{equation}
\hat{\textbf{b}}_{i,j,k}=(S^{(\Gamma)}_{i,j,k} \otimes I_{3})\mathbf{b}_{i,j,k}.
\end{equation}
Through the expansion of matrices, the system dynamics in \eqref{e:weighted_alltogether} is written as
\begin{equation}\label{e:weighted_alltogether2}
    \dot{\mathbf{x}} = \sum_{\Gamma=1}^{m+2h} -\hat{W}_{i,j,k} {(}\mathbf{x} {+} \hat{\mathbf{b}}_{i,j,k}{)}.
\end{equation}
Next, by augmenting the system state vector to incorporate a constant term, denoted by $\hat{\mathbf{x}} =\begin{bmatrix} \mathbf{x} \\ 1 \end{bmatrix}\in \mathbb{R}^{3\nu+1}$, the system dynamics in \eqref{e:weighted_alltogether2} can be converted into a homogeneous form, 
\begin{equation}\label{e:homg}
    \dot{\hat{\mathbf{x}}} = A \hat{\mathbf{x}},
\end{equation}
where $$A = \sum_{\Gamma=1}^{m+2h} A_{i, j, k}^{(\Gamma)} \in \mathbb{R}^{(3\nu+1)\times (3\nu+1)}$$ and $$A_{i, j, k}^{(\Gamma)} = \begin{bmatrix} 
    -\hat{W}_{i,j,k} & -\hat{W}_{i,j,k} \hat{\mathbf{b}}_{i,j,k}\\  \mathbf{0}_{1\times 3\nu} & \mathbf{0}_{1\times 1} 
    \end{bmatrix},$$ with $\mathcal{V}^{(\Gamma)} = \{i, j, k\}$.
The time-varying linear representation of W-FPCP derived above now
facilitates its convergence analysis described next.

\subsection{Convergence of W-FPCP}
This section examines the convergence property of the origami dynamic model based on W-FPCP. {In particular, we start with the convergence proof of triangulated consensus in \eqref{e:proj} applied to a single panel and then extend to W-FPCP in \eqref{eqn:weighted_consensus} for one single panel. After that, we establish the convergence for the aggregated W-FPCP applied to the overall system under mild conditions.}
%establish a local Lyapunov stability result for equilibrium configurations induced by the proposed W-FPCP dynamics.}

\begin{lemma} \label{t:similar}
The matrices ${W_{i,j,k}=\tilde{M}_{i,j,k}^{\top} \Omega_{i,j,k} \tilde{M}_{i,j,k}}\in \mathbb{R}^{{9} \times 9}$ and $\Omega_{i,j,k} \in \mathbb{R}^{6 \times 6}$ defined in \eqref{e:weights} and \eqref{eqn:weighted_consensus}, respectively, share the same nonzero eigenvalues. In addition, $W_{i,j,k}$ is positive semidefinite { since $\Omega_{i,j,k}$ is positive semidefinite}.
\end{lemma}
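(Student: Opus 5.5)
The plan is to exploit the row-orthonormality of the stacked projection matrix. Since $M_{i,j,k}$ has orthonormal rows, $M_{i,j,k}M_{i,j,k}^{\top}=I_2$. The mixed-product property of the Kronecker product then gives
\[
\tilde{M}_{i,j,k}\tilde{M}_{i,j,k}^{\top}=(I_3\otimes M_{i,j,k})(I_3\otimes M_{i,j,k}^{\top})=I_3\otimes (M_{i,j,k}M_{i,j,k}^{\top})=I_6 .
\]
Thus $\tilde{M}_{i,j,k}\in\mathbb{R}^{6\times 9}$ also has orthonormal rows. This identity is the only structural fact about the projection that the argument needs.

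For the eigenvalue claim, I will invoke the standard fact that for $A\in\mathbb{R}^{p\times q}$ and $B\in\mathbb{R}^{q\times p}$, the products $AB$ and $BA$ have the same nonzero eigenvalues with the same multiplicities. This follows, for instance, from $\lambda^{q}\det(\lambda I_p-AB)=\lambda^{p}\det(\lambda I_q-BA)$. I will apply it with $A=\tilde{M}_{i,j,k}^{\top}$ and $B=\Omega_{i,j,k}\tilde{M}_{i,j,k}$. Then $AB=W_{i,j,k}$, and
\[
BA=\Omega_{i,j,k}\tilde{M}_{i,j,k}\tilde{M}_{i,j,k}^{\top}=\Omega_{i,j,k}.
\]
Hence $W_{i,j,k}$ and $\Omega_{i,j,k}$ share their nonzero spectrum, and $W_{i,j,k}$ additionally has at least three zero eigenvalues, coming from $\ker \tilde{M}_{i,j,k}$. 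As an alternative route, I could complete $\tilde{M}_{i,j,k}$ to an orthogonal $9\times 9$ matrix $Q$ by appending rows $I_3\otimes \mathbf{n}^{\top}$, where $\mathbf{n}$ is the unit normal. Then $QW_{i,j,k}Q^{\top}=\mathrm{diag}(\Omega_{i,j,k},0_{3})$, which exhibits the similarity directly and explains the lemma's label.

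For positive semidefiniteness, I first confirm that $\Omega_{i,j,k}$ is PSD. Reordering coordinates by the permutation that separates the first and second planar components turns $\Omega_{i,j,k}$ into $\mathrm{diag}(\Omega_\omega,\Omega_\gamma)$, where each block is a weighted Laplacian of $\mathcal{K}_3$ with nonnegative weights. Each block is PSD because its quadratic form is $\sum \omega_{\iota,\kappa}(y_\iota-y_\kappa)^2\ge 0$. Symmetry of $W_{i,j,k}$ is immediate. For any $\mathbf{z}\in\mathbb{R}^9$,
\[
\mathbf{z}^{\top}W_{i,j,k}\mathbf{z}=(\tilde{M}_{i,j,k}\mathbf{z})^{\top}\Omega_{i,j,k}(\tilde{M}_{i,j,k}\mathbf{z})\ge 0 .
\]

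I do not expect a real obstacle. The one point needing care is verifying that the specific frames in \eqref{eqn:project} and \eqref{eq:hinge_map} indeed have orthonormal rows, so that $MM^{\top}=I_2$ holds. For $M_{\text{panel}}$, the second row $(\mathbf{p}\times\mathbf{q})\times\mathbf{p}$ is orthogonal to $\mathbf{p}$. For $M_{\text{hinge1}}$ and $M_{\text{hinge2}}$, the cross product is orthogonal to the first row vector in each case. All rows are normalized, provided the vertices are non-collinear so that the normalizations are well defined. I will state this nondegeneracy as a standing assumption.
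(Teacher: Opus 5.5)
Your proposal is correct and matches the paper's proof: the paper also uses the fact that $CD$ and $DC$ share their nonzero eigenvalues, applied to $\tilde{M}_{i,j,k}^{\top}(\Omega_{i,j,k}\tilde{M}_{i,j,k})$ together with $\tilde{M}_{i,j,k}\tilde{M}_{i,j,k}^{\top}=I_6$. The only minor difference is that the paper gets positive semidefiniteness from symmetry plus the nonnegative spectrum inherited from $\Omega_{i,j,k}$, whereas your congruence identity $\mathbf{z}^{\top}W_{i,j,k}\mathbf{z}=(\tilde{M}_{i,j,k}\mathbf{z})^{\top}\Omega_{i,j,k}(\tilde{M}_{i,j,k}\mathbf{z})\ge 0$ gives it directly, without needing orthonormal rows at all.
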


\begin{proof}
This follows from a classical property of matrix similarity in reduced dimensions: for matrices $C \in \mathbb{R}^{n \times m}$ and $D \in \mathbb{R}^{m \times n}$, the nonzero eigenvalues of $CD \in \mathbb{R}^{n \times n}$ and $DC \in \mathbb{R}^{m \times m}$ are identical (including multiplicities).
Applying this to our case, we obtain
\begin{equation}
    {\lambda_{\text{nz}}}(\tilde{M}_{i,j,k}^{\top} \Omega_{i,j,k} \tilde{M}_{i,j,k}) = {\lambda_{\text{nz}}}(\tilde{M}_{i,j,k}\tilde{M}_{i,j,k}^{\top}\Omega_{i,j,k}  ),
\end{equation}
where $\lambda_{\text{nz}}(\cdot)$ denotes the set of nonzero eigenvalues. 
Since $\tilde{M} \tilde{M}^{\top} = I_6$ (using the definitions of $M$ and $\tilde{M}$), it follows that
\begin{equation}
    {\lambda_{\text{nz}}}(\tilde{M}_{i,j,k}^{\top} \Omega_{i,j,k} \tilde{M}_{i,j,k}) = {\lambda_{\text{nz}}}(\Omega_{i,j,k}).
\end{equation}
Thus, $\tilde{M}_{i,j,k}^{\top}\Omega_{i,j,k} \tilde{M}_{i,j,k}$ and $\Omega_{i,j,k}$ share the same set of
nonzero eigenvalues. In addition, as $\tilde{M}_{i,j,k}^{\top} \Omega_{i,j,k} \tilde{M}_{i,j,k}=(\tilde{M}_{i,j,k}^{\top} \Omega_{i,j,k} \tilde{M}_{i,j,k})^T$, $W_{i,j,k}$ is symmetric with non-negative eigenvalues. Thus, $W_{i,j,k}$ is positive semidefinite.
%Any additional eigenvalues introduced by the transformation will be zero due to the change in dimension.
\end{proof}

% {
% \begin{lemma}\label{l:same}
% Let \(W_{i,j,k}\in\mathbb R^{9\times 9}\) be the local interaction matrix
% associated with the vertex set \(\{i,j,k\}\), and let
% \(\hat W_{i,j,k}\in\mathbb R^{3\nu\times 3\nu}\) denote its embedding
% into the global coordinate space. Then
% $\lambda_{\mathrm{nz}}(\hat W_{i,j,k})=\lambda_{\mathrm{nz}}(W_{i,j,k}).
% $
% \end{lemma}}

% {
% \begin{proof}
% The embedding from \(W_{i,j,k}\) to \(\ehat W_{i,j,k}\) as defined in \eqref{e:weights} and \eqref{e:all_weights} only places the local interaction matrix into the rows and columns corresponding to the coordinates of vertices \(i,j,k\). All coordinates associated with vertices outside \(\{i,j,k\}\) are unaffected by this local interaction and therefore contribute zero rows and columns. Hence, after a simultaneous permutation of rows and columns that places the coordinates of vertices \(i,j,k\) first, there exists a permutation matrix \(P\) such that
% \[
% P\hat W_{i,j,k}P^\top
% =
% \begin{bmatrix}
% W_{i,j,k} & 0_{9\times(3\nu-9)}\\
% 0_{(3\nu-9)\times 9} & 0_{(3\nu-9)\times(3\nu-9)}
% \end{bmatrix}.
% \]
% Permutation similarity preserves the characteristic polynomial and hence
% the eigenvalues \cite[Sec.~1.3]{horn2013matrix}. Therefore,
% \[
% \det(\lambda I_{3\nu}-\hat W_{i,j,k})
% =
% \lambda^{3\nu-9}\det(\lambda I_9-W_{i,j,k}),
% \]
% so the embedding only appends \(3\nu-9\) additional zero eigenvalues.
% Thus the nonzero eigenvalues of \(\hat W_{i,j,k}\) and \(W_{i,j,k}\)
% coincide.
% \end{proof}}

\begin{corollary}\label{t:WequalOmega}
    The matrices $\Omega_{i,j,k}\in \mathbb{R}^{6 \times 6}$ and $\hat{W}_{i,j,k}\in \mathbb{R}^{{3}\nu \times {3}\nu}$ share the same set of nonzero eigenvalues.  In addition, $\hat W_{i,j,k}$ is positive semidefinite.
\end{corollary}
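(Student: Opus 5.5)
The plan is to reduce the corollary to Lemma~\ref{t:similar} by showing that the embedding $\hat{W}_{i,j,k}$ is, up to a simultaneous permutation of rows and columns, the block matrix with $W_{i,j,k}$ in one diagonal block and zeros elsewhere.

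\textbf{Step 1: identify $\hat{W}_{i,j,k}$ as a congruence by the selection matrix.} Set $P=S_{i,j,k}\otimes I_3\in\mathbb{R}^{3\nu\times 9}$. Since $S_{i,j,k}$ has exactly one unit entry in each of its three columns, placed in the distinct rows $i,j,k$, we have $S_{i,j,k}^{\top}S_{i,j,k}=I_3$ and hence $P^{\top}P=I_9$.

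Write the decomposition $W_{i,j,k}=\sum_{\iota,\kappa}G_{\iota,\kappa}\otimes W_{\iota,\kappa}$, with local indices $\iota,\kappa\in\{1,2,3\}$ mapped to global indices in $\{i,j,k\}$. Then $S_{i,j,k}G_{\iota,\kappa}S_{i,j,k}^{\top}=\hat{G}_{\iota,\kappa}$ for the corresponding global indices. By the mixed-product rule for Kronecker products, \eqref{e:all_weights} becomes
\[
\hat{W}_{i,j,k}=P\,W_{i,j,k}\,P^{\top}.
\]
This bookkeeping of local-versus-global indices is the only real content, and it is where I expect care to be needed, since the paper's notation reuses $i,j,k$ for both.

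\textbf{Step 2: compare nonzero eigenvalues.} Apply the same classical fact used in the proof of Lemma~\ref{t:similar}: for rectangular $C$ and $D$, the products $CD$ and $DC$ have the same nonzero eigenvalues, with multiplicity. Taking $C=P$ and $D=W_{i,j,k}P^{\top}$ gives
\[
\lambda_{\text{nz}}(\hat{W}_{i,j,k})=\lambda_{\text{nz}}(W_{i,j,k}P^{\top}P)=\lambda_{\text{nz}}(W_{i,j,k}).
\]
Lemma~\ref{t:similar} then yields $\lambda_{\text{nz}}(W_{i,j,k})=\lambda_{\text{nz}}(\Omega_{i,j,k})$, which chains to the first claim.

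As an alternative, one can complete $P$ to an orthogonal (permutation) matrix $[P\;\,Q]$. This exhibits $\hat{W}_{i,j,k}$ as orthogonally similar to $\mathrm{diag}(W_{i,j,k},0)$, showing that only $3\nu-9$ extra zero eigenvalues are appended.

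\textbf{Step 3: positive semidefiniteness.} Symmetry is immediate, since $\hat{W}_{i,j,k}^{\top}=PW_{i,j,k}^{\top}P^{\top}=\hat{W}_{i,j,k}$. For any $\mathbf{y}\in\mathbb{R}^{3\nu}$,
\[
\mathbf{y}^{\top}\hat{W}_{i,j,k}\mathbf{y}=(P^{\top}\mathbf{y})^{\top}W_{i,j,k}(P^{\top}\mathbf{y})\ge 0,
\]
because $W_{i,j,k}$ is positive semidefinite by Lemma~\ref{t:similar}. Hence $\hat{W}_{i,j,k}$ is positive semidefinite, which completes the corollary.
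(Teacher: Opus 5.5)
Your proof is correct and follows the same route as the paper: you show that the embedding $\hat W_{i,j,k}$ only appends zero eigenvalues to $W_{i,j,k}$, then chain with Lemma~\ref{t:similar}. The paper does this via permutation similarity (your ``alternative''), and your identity $\hat W_{i,j,k}=PW_{i,j,k}P^{\top}$ with $P^{\top}P=I_9$, combined with the $CD$/$DC$ fact, is just a more explicit version of that step; your congruence argument for positive semidefiniteness is equally valid and slightly more direct than the paper's appeal to symmetry plus nonnegative eigenvalues.
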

\begin{proof}
{
The embedding from \(W_{i,j,k}\) to \(\hat W_{i,j,k}\) as defined in \eqref{e:weights} and \eqref{e:all_weights} only places the local interaction matrix into the rows and columns corresponding to the coordinates of vertices \(i,j,k\).  Permutation similarity preserves the characteristic polynomial and hence the eigenvalues \cite[Sec.~1.3]{horn2013matrix}.  Therefore,
$\det(\lambda I_{3\nu}-\hat W_{i,j,k}) = \lambda^{3\nu-9}\det(\lambda I_9-W_{i,j,k})$.} we have ${\lambda_{\text{nz}}}(\hat{W}_{i,j,k})={\lambda_{\text{nz}}}({W}_{i,j,k})$. From Lemma \ref{t:similar}, we have ${\lambda_{\text{nz}}}(\tilde{M}_{i,j,k}^{\top} \Omega_{i,j,k} \tilde{M}_{i,j,k}) = {\lambda_{\text{nz}}}(\Omega_{i,j,k})$. As $W_{i,j,k}=\tilde{M}_{i,j,k}^{\top} \Omega_{i,j,k}\tilde{M}_{i,j,k}$, it thus follows that ${\lambda_{\text{nz}}}(\Omega_{i,j,k})={\lambda_{\text{nz}}}(\hat{W}_{i,j,k})$. As $\hat W_{i,j,k}$ is symmetric with non-negative eigenvalues, $\hat W_{i,j,k}$ is positive semidefinite.
\end{proof}

{
Next, we first examine the convergence of FPCP applied to a single panel, i.e., the consensus protocols in \eqref{e:proj} and \eqref{e:hinge_con}. Without loss of generality, we focus on the convergence of triangulated consensus in \eqref{e:proj}.}
{
\begin{theorem}[Theorem 5.1 in \cite{acc2025}]\label{the:local_c}
Consider the triangulated consensus dynamics in \eqref{e:proj}, where
$M_{\text{panel}}$ and $\tilde{\mathbf r}^{\text{panel}}_{i,j,k}$
are fixed during the local panel update. Then the projected
relative coordinates converge to the desired triangular formation, i.e.,
\[
M_{\text{panel}}\bigl(\mathbf{x}_{\iota}(t)-\mathbf{x}_{\kappa}(t)\bigr)
\rightarrow
\tilde{\mathbf r}^{\text{panel}}_{\iota}
-
\tilde{\mathbf r}^{\text{panel}}_{\kappa},
\qquad
\iota, \kappa\in\{i,j,k\}.
\]
Equivalently, the projected configuration converges to the desired
formation up to a translation in the local 2D frame.
\end{theorem}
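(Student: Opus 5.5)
Since $M_{\text{panel}}$ and $\tilde{\mathbf r}^{\text{panel}}_{i,j,k}$ are held fixed during the local update, \eqref{e:proj} is a linear time-invariant affine system in $\mathbf{x}_{i,j,k}$. The plan is to project the dynamics into the panel frame, recognize there a standard 2D formation consensus, and then invoke the classical Laplacian convergence result.

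\textbf{Step 1: reduce to the projected coordinates.} Write $\tilde M=\tilde M_{\text{panel}}$ and set $\mathbf{y}(t)=\tilde M\mathbf{x}_{i,j,k}(t)\in\mathbb{R}^6$. Left-multiplying \eqref{e:proj} by $\tilde M$ and using $\tilde M\tilde M^\top=I_3\otimes MM^\top=I_6$ gives
\[
\dot{\mathbf y}=-L_2(\mathbf y-\tilde{\mathbf r}^{\text{panel}}_{i,j,k}).
\]
The component of $\mathbf{x}_{i,j,k}$ in $\ker\tilde M$ (the out-of-plane part) has zero velocity, since $\dot{\mathbf x}\in\operatorname{range}\tilde M^\top$. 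It therefore stays constant and does not affect the claimed statement.

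\textbf{Step 2: decompose the error along $\ker L_2$.} Let $\mathbf e=\mathbf y-\tilde{\mathbf r}^{\text{panel}}_{i,j,k}$, so that $\dot{\mathbf e}=-L_2\mathbf e$. Because $\mathcal K_3$ is connected, $L(\mathcal K_3)$ is symmetric positive semidefinite with simple zero eigenvalue and eigenvector $\mathbf 1_3$. Hence $L_2=L(\mathcal K_3)\otimes I_2$ has kernel $\{\mathbf 1_3\otimes\mathbf c:\mathbf c\in\mathbb{R}^2\}$, and its remaining eigenvalues are all $3>0$. Indeed, $L(\mathcal K_3)=3I-\mathbf 1\mathbf 1^\top$, so one can write explicitly
\[
e^{-L_2t}=\bigl(\tfrac13\mathbf 1\mathbf 1^\top+e^{-3t}(I-\tfrac13\mathbf 1\mathbf 1^\top)\bigr)\otimes I_2 .
\]
Thus $\mathbf e(t)\to \mathbf 1_3\otimes \bar{\mathbf c}$ exponentially, where $\bar{\mathbf c}=\tfrac13\sum_{\iota}\mathbf e_\iota(0)$. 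In other words, every block $\mathbf e_\iota$ converges to the same 2D vector.

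\textbf{Step 3: translate back.} Since $\mathbf e_\iota-\mathbf e_\kappa\to0$ for all $\iota,\kappa\in\{i,j,k\}$, we obtain $M\bigl(\mathbf x_\iota(t)-\mathbf x_\kappa(t)\bigr)\to\tilde{\mathbf r}^{\text{panel}}_\iota-\tilde{\mathbf r}^{\text{panel}}_\kappa$. The common translation $\bar{\mathbf c}$ is exactly the "up to a translation" clause of the statement.

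\textbf{Main obstacle and remarks.} The main subtlety is justifying that $M$ is genuinely constant along the trajectory; I take this from the hypothesis. One can also argue consistency: the motion stays in the affine plane through the initial vertices spanned by the rows of $M$, so the panel plane, though not necessarily the in-plane axes, is invariant. Alternatively, a Lyapunov function $V=\tfrac12\mathbf e^\top L_2\mathbf e$ with $\dot V=-\|L_2\mathbf e\|^2$ gives the same conclusion via LaSalle. This route would also carry over to the weighted version \eqref{eqn:weighted_consensus} using Lemma~\ref{t:similar}, provided $\Omega_{i,j,k}$ has the two-dimensional kernel of a connected weighted graph.
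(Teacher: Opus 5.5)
Your proof is correct and follows the paper's route: both use $\tilde M_{\text{panel}}\tilde M_{\text{panel}}^\top=I_6$ to reduce to the projected error dynamics $\dot{\boldsymbol\delta}=-L_2\boldsymbol\delta$, identify $\ker L_2=\{(\mathbf{1}_3\otimes I_2)\mathbf{c}\}$, and take vertex differences to remove the common translation. The only difference is that you solve this linear system in closed form via $L(\mathcal{K}_3)=3I_3-\mathbf{1}_3\mathbf{1}_3^\top$, whereas the paper applies LaSalle with $V=\tfrac12\|\boldsymbol\delta\|^2$; your version also gives the rate $e^{-3t}$ and shows the limiting translation $\bar{\mathbf c}$ is the average initial error (LaSalle alone gives convergence only to the set $\ker L_2$, which already suffices for the differences).
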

}
{
The proof of Theorem \ref{the:local_c} is deferred to Appendix I.
}

{
Next, we will extend the convergence analysis to the weight-FPCP applied to a single panel.
\begin{corollary}[Convergence of the local W-FPCP]\label{t:W_conv}
Consider the local W-FPCP dynamics in \eqref{eqn:weighted_consensus}, 
where $\Omega_{i,j,k}$ in \eqref{e:weights} is symmetric positive
semidefinite and satisfies
\begin{equation}
    \ker(\Omega_{i,j,k})
    =
    \left\{
    (\mathbf{1}_3\otimes I_2)\mathbf c:
    \mathbf c\in\mathbb{R}^2
    \right\}.
    \label{eq:omega_nullspace}
\end{equation}
Then the projected relative coordinates converge to the desired
weighted formation, namely,
\[
M_{i,j,k}\left(\mathbf{x}_{\iota}(t)-\mathbf{x}_{\kappa}(t)\right)
\rightarrow
\tilde{\mathbf r}_{\iota}-\widetilde{\mathbf r}_{\kappa},
\qquad
\iota,\kappa\in\{i,j,k\}.
\]
Equivalently, the local W-FPCP converges to the desired projected
formation up to a rigid translation in the local 2D frame.
\end{corollary}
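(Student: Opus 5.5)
The plan is to run the argument of Theorem~\ref{the:local_c} with $L_2$ replaced by $\Omega_{i,j,k}$, working throughout in the projected coordinates. As in Theorem~\ref{the:local_c}, $M_{i,j,k}$ and $\tilde{\mathbf r}_{i,j,k}$ are held fixed during the local update. Write $\tilde M=\tilde M_{i,j,k}$ and $\Omega=\Omega_{i,j,k}$, and set
\[
\mathbf e(t)=\tilde M\mathbf x_{i,j,k}(t)-\tilde{\mathbf r}_{i,j,k}\in\mathbb{R}^6 .
\]
Because $\tilde M\tilde M^{\top}=I_6$, we have $\tilde M\tilde M^{\top}\tilde{\mathbf r}_{i,j,k}=\tilde{\mathbf r}_{i,j,k}$. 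Multiplying \eqref{eqn:weighted_consensus} on the left by $\tilde M$ then gives the linear time-invariant error dynamics
\[
\dot{\mathbf e}=-\tilde M\tilde M^{\top}\Omega\tilde M\bigl(\mathbf x_{i,j,k}-\tilde M^{\top}\tilde{\mathbf r}_{i,j,k}\bigr)=-\Omega\,\mathbf e .
\]
So $\mathbf e(t)=e^{-\Omega t}\mathbf e(0)$, and the nonlinear lifted system reduces exactly to a weighted Laplacian flow in the local 2D frame. Any out-of-plane component of $\mathbf x_{i,j,k}$ lies in $\ker\tilde M$ and has zero velocity, so it does not affect the projected quantities.

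Next I use the spectral structure of $\Omega$. It is symmetric positive semidefinite, so it has an orthogonal eigendecomposition with nonnegative eigenvalues. By hypothesis \eqref{eq:omega_nullspace}, its kernel is exactly the two-dimensional subspace $\mathcal N=\{(\mathbf 1_3\otimes I_2)\mathbf c\}$, and every other eigenvalue is at least $\lambda_{\min}^{+}>0$. Decompose $\mathbf e(0)=\mathbf e_{\mathcal N}+\mathbf e_{\perp}$ orthogonally. Then
\[
\mathbf e(t)=\mathbf e_{\mathcal N}+e^{-\Omega t}\mathbf e_{\perp},\qquad \|e^{-\Omega t}\mathbf e_{\perp}\|\le e^{-\lambda_{\min}^{+}t}\|\mathbf e_{\perp}\|\to 0 .
\]
Hence $\mathbf e(t)\to(\mathbf 1_3\otimes I_2)\mathbf c^\ast$, where $\mathbf c^\ast=\tfrac13(\mathbf 1_3\otimes I_2)^{\top}\mathbf e(0)$ is the average 2D offset. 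An equivalent route is the Lyapunov function $V=\tfrac12\|\mathbf e_\perp\|^2$, which satisfies $\dot V\le-\lambda_{\min}^{+}\cdot 2V$.

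Finally I read off the vertex blocks. Block $\iota$ of $\mathbf e$ is $M_{i,j,k}\mathbf x_\iota-\tilde{\mathbf r}_\iota$, and all three blocks converge to the same vector $\mathbf c^\ast$. Subtracting blocks $\iota$ and $\kappa$ gives
\[
M_{i,j,k}(\mathbf x_\iota-\mathbf x_\kappa)\to\tilde{\mathbf r}_\iota-\tilde{\mathbf r}_\kappa ,
\]
which is the claim, with the limiting rigid translation being $\mathbf c^\ast$.

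The main obstacle is justifying that $\ker\Omega$ is exactly $\mathcal N$, since everything else is routine linear algebra. The statement assumes this, but I would also note when it holds. From \eqref{e:weights}, $\Omega$ is a permutation of $\Omega_\omega\oplus\Omega_\gamma$, the weighted $K_3$ Laplacians for the two planar axes. Each of these has a one-dimensional kernel spanned by $\mathbf 1_3$ precisely when its weight graph is connected, meaning at least two of the three weights are positive. The second subtle point is the frozen-frame assumption: if $M_{i,j,k}$ were recomputed from the moving vertices, the reduction $\dot{\mathbf e}=-\Omega\mathbf e$ would pick up extra terms from $\dot{\tilde M}$. I would therefore state explicitly that the argument treats the frame as fixed, exactly as in Theorem~\ref{the:local_c}.
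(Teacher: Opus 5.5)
Your proposal is correct and follows the paper's proof: use $\tilde M\tilde M^{\top}=I_6$ to reduce the lifted dynamics to $\dot{\boldsymbol\delta}=-\Omega_{i,j,k}\boldsymbol\delta$, show $\boldsymbol\delta(t)$ approaches $\ker\Omega_{i,j,k}=\{(\mathbf 1_3\otimes I_2)\mathbf c\}$, then subtract vertex blocks. Where the paper uses the LaSalle-type step from Theorem~\ref{the:local_c}, you use the explicit spectral decomposition, which also gives the exponential rate $\lambda_{\min}^{+}$ and the limit offset $\mathbf c^\ast$; your remarks on the frozen frame and on when the kernel condition holds are accurate.
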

}

{
The proof of Corollary \ref{t:W_conv} is deferred to Appendix II.
}

{
Lastly, we examine the convergence of the aggregated W-FPCP in \eqref{e:homg} for the entire system, considering all panels and hinges.
\begin{assumption}\label{assump1}
Consider the aggregated W-FPCP dynamics in \eqref{e:homg} over a sequence of time intervals
$[t_q,t_{q+1})$, $q=0,1,\ldots$, with $t_q\to\infty$. The target formation $\tilde{\mathbf{r}}_{i, j, k}^{(\Gamma)}$, $\Gamma=1,\ldots,m+2h$, is fixed for the entire time. On each interval, the states are updated with fixed projection matrices. According to \eqref{e:weighted_alltogether2}, the dynamics for each time interval is rewritten as
\begin{equation}
    \dot{\mathbf{x}}
    = -\sum_{\Gamma=1}^{m+2h}
    \hat{W}_{\Gamma,q}(\mathbf{x}
    +{\mathbf b}_{\Gamma,q}),
    \; t\in[t_q,t_{q+1}),
    \label{eq:piecewise_agg_wfpcp}
\end{equation}
with subsystem $\mathcal{V}^{(\Gamma)} = \{i, j, k\}$, $\Gamma=1,\ldots,m+2h$.
\end{assumption}}

{
Assumption \ref{assump1} implies that for each time interval $q$, the system dynamics \eqref{eq:piecewise_agg_wfpcp} during that interval is a linear dynamical system, as the projection matrices hold constant. Thus, over the entire time, this assumption implies that the system has piecewise-linear dynamics.
Given that the target formation $\tilde{\mathbf{r}}_{i, j, k}^{(\Gamma)}$, $\Gamma=1,\ldots,m+2h$, is fixed for the entire time, we have the following common-equilibrium assumption.
\begin{assumption}\label{assump2}
For the dynamical system in \eqref{eq:piecewise_agg_wfpcp}, there exists a compatible equilibrium $\mathbf{x}^\ast$ such that
\begin{equation}
    H_q\mathbf{x}^\ast=\mathbf{g}_q,
    \qquad \forall q\ge 0,
    \label{eq:common_equilibrium}
\end{equation}
\end{assumption}
where $ H_q:=\sum_{\Gamma=1}^{m+2h}
    \hat{W}_{\Gamma,q}$, and $\mathbf{g}_q
    :=-\sum_{\Gamma=1}^{m+2h}
    \hat{W}_{\Gamma,q}{\mathbf b}_{\Gamma,q}$.
}

{
\begin{theorem}[Convergence of the aggregated W-FPCP]\label{t:a_conv}
Under Assumptions \ref{assump1} and \ref{assump2}, let $\mathcal N$ denote the fixed null-motion subspace associated with
global motion of the aggregated origami model. We assume that the fixed aggregated matrices $H_q$ at each time interval $[t_q,t_{q+1})$ have a common nullspace,
\[
    \ker(H_q)=\mathcal N,\qquad \forall q\ge 0.
\]
Assume that there exists $\mu>0$ such that
\begin{equation}
    \mathbf{e}^{\top}H_q\mathbf{e}
    \ge
    \mu \|\mathbf{e}\|^2,
    \qquad
    \forall \mathbf{e}\in\mathcal N^\perp,\ \forall q\ge 0.
    \label{eq:uniform_definiteness}
\end{equation}
Then the component of $\mathbf{x}(t)-\mathbf{x}^\ast$ orthogonal to
$\mathcal N$ converges to zero, i.e.,
$ P_{\mathcal N^\perp}\bigl(\mathbf{x}(t)-\mathbf{x}^\ast\bigr)
    \to 0$. Hence $\mathbf{x}(t)$ converges to the affine equilibrium set
$\mathbf{x}^\ast+\mathcal N$.
\end{theorem}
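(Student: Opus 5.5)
Our plan is a common quadratic Lyapunov function for the switched linear system in \eqref{eq:piecewise_agg_wfpcp}, restricted to $\mathcal N^\perp$.

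\textbf{Error dynamics.} On each interval $[t_q,t_{q+1})$ we rewrite the dynamics as $\dot{\mathbf x}=-H_q\mathbf x+\mathbf g_q$. Assumption~\ref{assump2} gives $\mathbf g_q=H_q\mathbf x^\ast$. With $\mathbf e(t):=\mathbf x(t)-\mathbf x^\ast$ this yields
\begin{equation*}
\dot{\mathbf e}=-H_q\mathbf e,\qquad t\in[t_q,t_{q+1}).
\end{equation*}
This is a homogeneous switched linear system. The solution $\mathbf e(t)$ is continuous across switching instants, because the state itself does not jump.

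\textbf{Projection onto $\mathcal N^\perp$.} By Corollary~\ref{t:WequalOmega}, each $\hat W_{\Gamma,q}$ is symmetric positive semidefinite, so every $H_q$ is symmetric positive semidefinite. Since $\ker(H_q)=\mathcal N$ and $H_q$ is symmetric, we get $\operatorname{range}(H_q)=\mathcal N^\perp$. Hence $H_q$ leaves both $\mathcal N$ and $\mathcal N^\perp$ invariant. Let $P:=P_{\mathcal N^\perp}$, which is a fixed orthogonal projector independent of $q$. Then $PH_q=H_qP=H_q$.

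Set $\mathbf e_\perp:=P\mathbf e$. Applying $P$ to the error dynamics gives $\dot{\mathbf e}_\perp=-PH_q\mathbf e=-H_qP\mathbf e=-H_q\mathbf e_\perp$ on every interval. The $\mathcal N$-component satisfies $\frac{d}{dt}(I-P)\mathbf e=-(I-P)H_q\mathbf e=0$, so it stays constant. This is the null motion that the statement leaves unconstrained.

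\textbf{Common Lyapunov function.} Take $V(t)=\tfrac12\|\mathbf e_\perp(t)\|^2$. It is continuous in $t$ and piecewise $C^1$. On each interval,
\begin{equation*}
\dot V=-\mathbf e_\perp^\top H_q\mathbf e_\perp\le-\mu\|\mathbf e_\perp\|^2=-2\mu V,
\end{equation*}
using \eqref{eq:uniform_definiteness} with $\mathbf e_\perp\in\mathcal N^\perp$. Because $\mu$ does not depend on $q$, and $V$ is continuous at each switching time $t_q$, the bound chains across intervals. We then apply the comparison lemma interval by interval and concatenate, which gives $V(t)\le e^{-2\mu(t-t_0)}V(t_0)$ for all $t\ge t_0$.

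The hypothesis $t_q\to\infty$ ensures that the intervals cover $[t_0,\infty)$ with no accumulation of switches. Therefore $\|P(\mathbf x(t)-\mathbf x^\ast)\|\le e^{-\mu(t-t_0)}\|P(\mathbf x(t_0)-\mathbf x^\ast)\|\to0$. Since the $\mathcal N$-component of $\mathbf e$ is constant, $\mathbf x(t)$ converges to the point $\mathbf x^\ast+(I-P)\mathbf e(t_0)$. In particular, $\mathbf x(t)$ converges to the affine set $\mathbf x^\ast+\mathcal N$.

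\textbf{Main obstacle.} The delicate step is the second one: showing that the projection onto $\mathcal N^\perp$ commutes with every $H_q$, so that the dynamics decouple. This relies on two facts together. First, each $H_q$ must be symmetric, which we take from Lemma~\ref{t:similar} and Corollary~\ref{t:WequalOmega}. Second, the nullspace must be common to all $q$, which is assumed. Without a common $\mathcal N$, $P$ would vary with $q$, the component in $\mathcal N$ could feed back into $\mathcal N^\perp$ at switching times, and $V$ would not be a common Lyapunov function. I would state this dependence explicitly in the proof.
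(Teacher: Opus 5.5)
Your proposal is correct and follows essentially the same route as the paper's proof. You reduce to the error dynamics $\dot{\mathbf e}=-H_q\mathbf e$ via the common equilibrium, restrict to $\mathcal N^\perp$, and use $V=\tfrac12\|\mathbf e_\perp\|^2$ as a common Lyapunov function with $\dot V\le-2\mu V$, chained across switching times by continuity. You add two things the paper does not state: the explicit symmetry argument giving $P_{\mathcal N^\perp}H_q=H_qP_{\mathcal N^\perp}=H_q$, and the observation that the $\mathcal N$-component of $\mathbf e$ stays constant, so $\mathbf x(t)$ in fact converges to the specific point $\mathbf x^\ast+(I-P_{\mathcal N^\perp})\mathbf e(t_0)$.
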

}
{
The proof of Theorem \ref{t:a_conv} is deferred to Appendix III.
}

\subsection{Parameter Determination for Weights of FPCP}\label{s:opt}
The edge weights embedded in each $6\times6$ weighted Laplacian $\Omega_{i,j,k}$ governs how fast and in which directions the vertices move under the FPCP protocol. Determining these weights is therefore important for
{improving the trajectory-reproduction fidelity of the proposed
consensus-based reconfiguration model. Consistent with the effective-parameter
interpretation above, these weights are identified from trajectory data.} With
captured vertex trajectories, the weight determination problem can be formulated
as an optimization problem. Specifically, the weights are chosen to minimize
differences between the {predictions of the consensus model}
and the observed vertex trajectories. This subsection provides the mathematical
formulation of the corresponding parameter-estimation problem and examines how
parameter optimization can improve the fidelity of the proposed
{consensus-based reconfiguration model.}

Assume that the coordinates of all vertices are observed at discrete times $t_{\kappa}$, $\kappa=1,\ldots,K$, with intervals $\Delta t_{\kappa} = t_{\kappa + 1} - t_{\kappa}$. Denote the experimentally measured vertex coordinates at each time step as  $\mathbf{y}(t_{\kappa})\in\mathbb{R}^{3\nu + 1}$. Correspondingly, the theoretical prediction from the consensus-based discrete-time model is denoted as $\mathbf{x}(t_{\kappa})\in\mathbb{R}^{3\nu + 1}$, whose evolution is governed by discretizing the continuous system dynamics in \eqref{e:homg}, expressed as
$$\mathbf{x}(t_{\kappa + 1}) = \left(I_{3\nu + 1} + A(\Omega) \Delta t_{\kappa} \right) \mathbf{x}(t_{\kappa})$$
where $A(\Omega)=\sum_{\Gamma=1}^{m+2h} A_{i, j, k}^{(\Gamma)}(\Omega)$, and each component matrix $A_{i, j, k}^{(\Gamma)}(\omega)$ depends on the corresponding project conversion matrix and weight parameters $\Omega^{(\Gamma)}$, $\Gamma=1,\ldots,m+2h$.

The parameter optimization aims to find the optimal set of weights $\Omega^{(\Gamma)}$ that minimize the cumulative mismatch between observed and predicted vertex positions. 
In summary, the parameter optimization problem is formulated as
%When optimizing to fit the actual system data, the system should be set as a discrete system. From this, we will need to get the matrix exponential of the system to get the discrete system. However, it is computationally expensive to calculate for the matrix exponential, so power series approximation will be used. 
\begin{subequations}\label{eq:design}
\begin{align}
    \min_{\Omega^{(1)}, \ldots, \Omega^{(m+2h)}}
   & \sum_{\kappa=1}^{K} || \mathbf{y}(t_{\kappa}) - \mathbf{x}(t_{\kappa}) ||_2 \label{eq:obj1}\\
    \text{s.t.}& \quad\mathbf{x}(t_{\kappa + 1}) = \left(I_{3\nu + 1} + A \Delta t_{\kappa} \right) \mathbf{x}(t_{\kappa})\nonumber\\
    & \quad\kappa=1,\ldots,K .
\end{align}
\end{subequations}
This constrained optimization problem can be formulated as nonlinear program (NLP) due to the bilinear dependence of the system states and the weights within the optimization. Existing NLP solvers, e.g., SNOPT, can efficiently address such problems and are employed to solve the optimization numerically.

%Performing this straightforward optimization to determine the weights of the panels serves as a foundation for analyzing more complex systems using these simplified equations. This approach streamlines calculations, enabling the system to efficiently identify a fitting model that can later be integrated into control mechanisms. Building on these fundamental optimizations allows more intricate models to be developed and constructed incrementally, providing a scalable framework for modeling and control applications.

\section{Simulation Results}
{ To demonstrate the effectiveness of W-FPCP in representing origami reconfiguration dynamics, we consider two representative cases: a simplified two-panel structure and a Kresling pattern. The two-panel structure is used as
a physics-based benchmark, where the reference trajectory is
generated from a bar-and-hinge mechanics model. This benchmark directly evaluates whether W-FPCP can reproduce vertex-coordinate trajectories generated by mechanical effects such as hinge stiffness, damping, and inertia. The Kresling pattern is then used to evaluate the proposed framework on a more complex origami reconfiguration process. Since a calibrated physics-based
bar-and-hinge with known material, hinge, damping, and inertia
parameters is not available for the Kresling structure considered here, we use time-discretized geometric reconfiguration data for parameter identification and trajectory comparison. Therefore, the Kresling example should be interpreted as a calibrated trajectory-reproduction study, while the two-panel case provides the mechanics-based validation benchmark. Since the W-FPCP model in \eqref{e:homg} is an autonomous dynamical system without external control input, both simulation cases present the folding/unfolding process from an intermediate unstable state to a stable equilibrium state without external control.

\subsection{Two-Panel Structure}
We first evaluate the proposed W-FPCP on a simplified two-panel origami structure with a mechanics-based reference trajectory. The structure consists of two triangular panels connected by a shared hinge. This example captures the essential hinge-driven unfolding behavior while allowing the reference
trajectory to be generated from a bar-and-hinge model that includes mechanical effects such as hinge stiffness, damping, and inertia derived from \cite{liu2017bar}.

In the bar-and-hinge model, the two panels rotate about the shared hinge from an initial fold angle of $100^\circ$ toward the equilibrium angle $180^\circ$, as shown Fig.~\ref{fig:traj_two_panel}. The mechanical parameters used in the bar-and-hinge simulation, including the panel geometry, bar stiffness, hinge stiffness, damping coefficient, and time step, are summarized in Table~\ref{tab:two_panel_parameters}. The resulting time histories of the panel vertices are used as the mechanics-based reference trajectory. 
\begin{table}[ht]
\centering
\begin{tabular}{|c|c|c|}
\hline
\textbf{Parameter} & \textbf{Number} & \textbf{Unit} \\
\hline
Axial bar stiffness & $1.0 \times 10^{4}$ & $\mathrm{N/m}$ \\
\hline
Rotational hinge stiffness & $10$ & $\mathrm{N\,m/rad}$ \\
\hline
Time step & $0.01$ & $\mathrm{s}$ \\
\hline
Translational damping coefficient & $6.0$ & $\mathrm{N\,s/m}$ \\
\hline
\end{tabular}
\caption{Physical parameters used in the bar-and-hinge dynamic model.}\label{tab:two_panel_parameters}
\end{table}

\vspace{-0.5cm}
\begin{figure}[!h]
    \centering
    \includegraphics[width=1\linewidth]{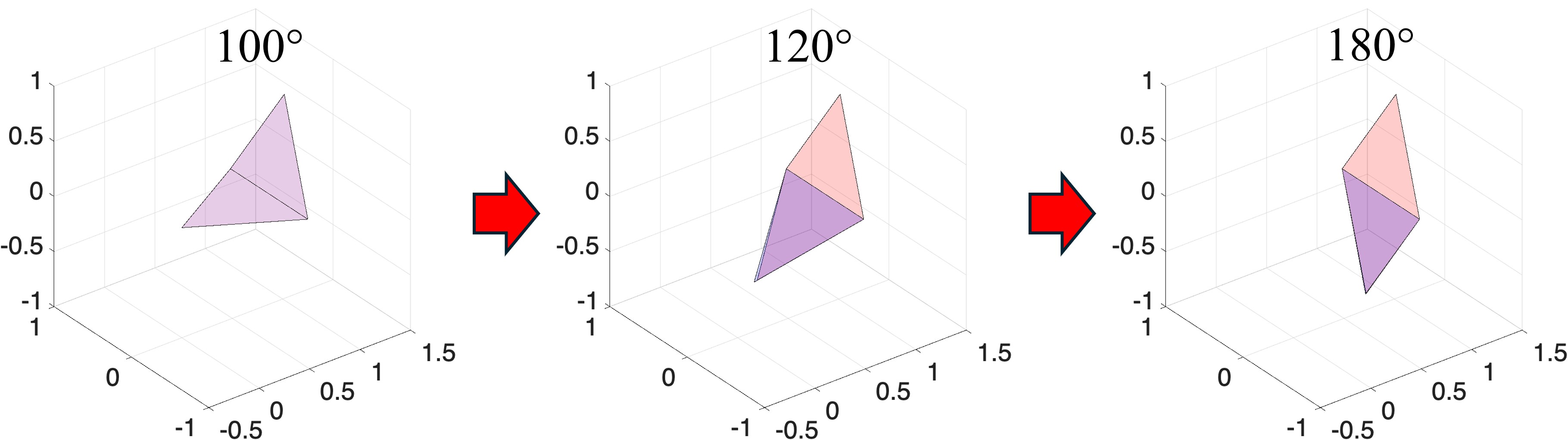}
    \caption{Reference bar-and-hinge model (red) and calculated (blue) states of the two-panel structure during the nominal unfolding process.}
    \label{fig:traj_two_panel}
\end{figure}

The W-FPCP model is constructed using two local triangular-panel subsystems. Each local triangular subsystem contains six projected weighting parameters.
These weights are first calibrated by minimizing the mismatch between the W-FPCP-predicted vertex trajectories and the bar-and-hinge reference trajectory for the nominal $100^\circ \to 180^\circ$ unfolding motion. As shown in
Fig.~\ref{fig:two_panel_nominal}, the calibrated W-FPCP closely reproduces the vertex trajectories generated by the bar-and-hinge model.

\begin{figure}[!h]
    \centering
    \includegraphics[width=0.9\linewidth]{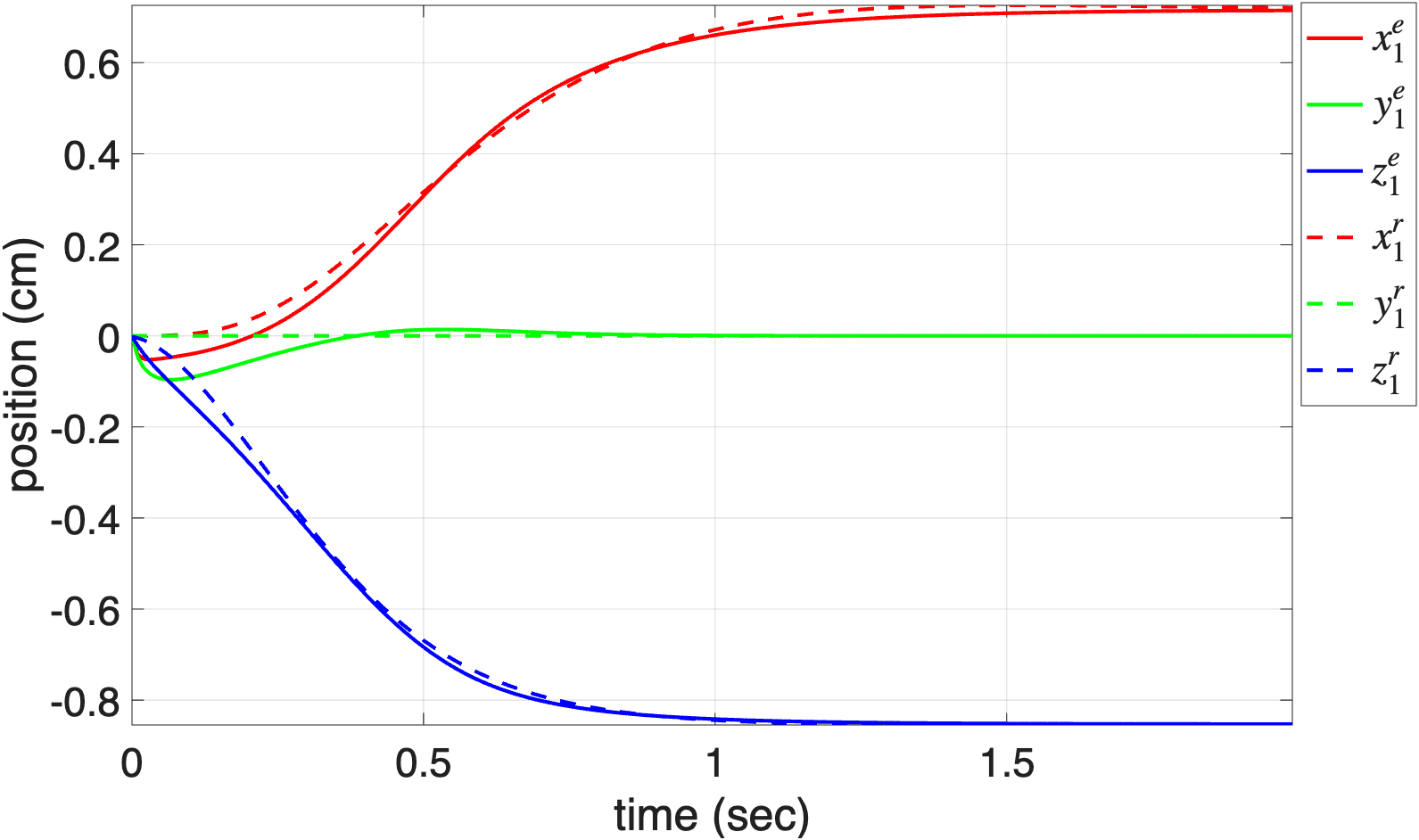}
    \caption{Comparison of coordinate history of trajectory two between calculated results (solid line) and reference data (dashed line) for Vertex 1 for two panel structure nominal simulation.}
    \label{fig:two_panel_nominal}
\end{figure}

To further examine the robustness of the calibrated weights, we keep the optimized weights fixed and simulate W-FPCP from perturbed initial vertex configurations. The structure rotates to a different orientation and starts with the same initial folding angle, while small vertex-level perturbations are introduced
to represent panel expansion and twisting disturbances. The corresponding bar-and-hinge trajectory is generated from the same perturbed initial configuration, and no re-optimization is performed. As shown in Figs.~\ref{fig:traj_two_panel} and \ref{fig:two_panel_noise}, the fixed-weight W-FPCP model remains able to reproduce the main trajectory trend under these initial perturbations.

\begin{figure}[!h]
    \centering
    \includegraphics[width=1\linewidth]{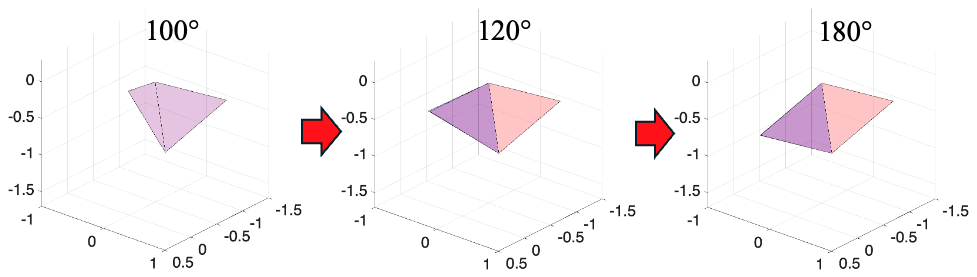}
    \caption{Reference bar-and-hinge model (red) and calculated (blue) states of the two-panel structure during the unfolding process with different configuration and noise.}
    \label{fig:traj_two_panel}
\end{figure}

\begin{figure}[!h]
    \centering
    \includegraphics[width=0.9\linewidth]{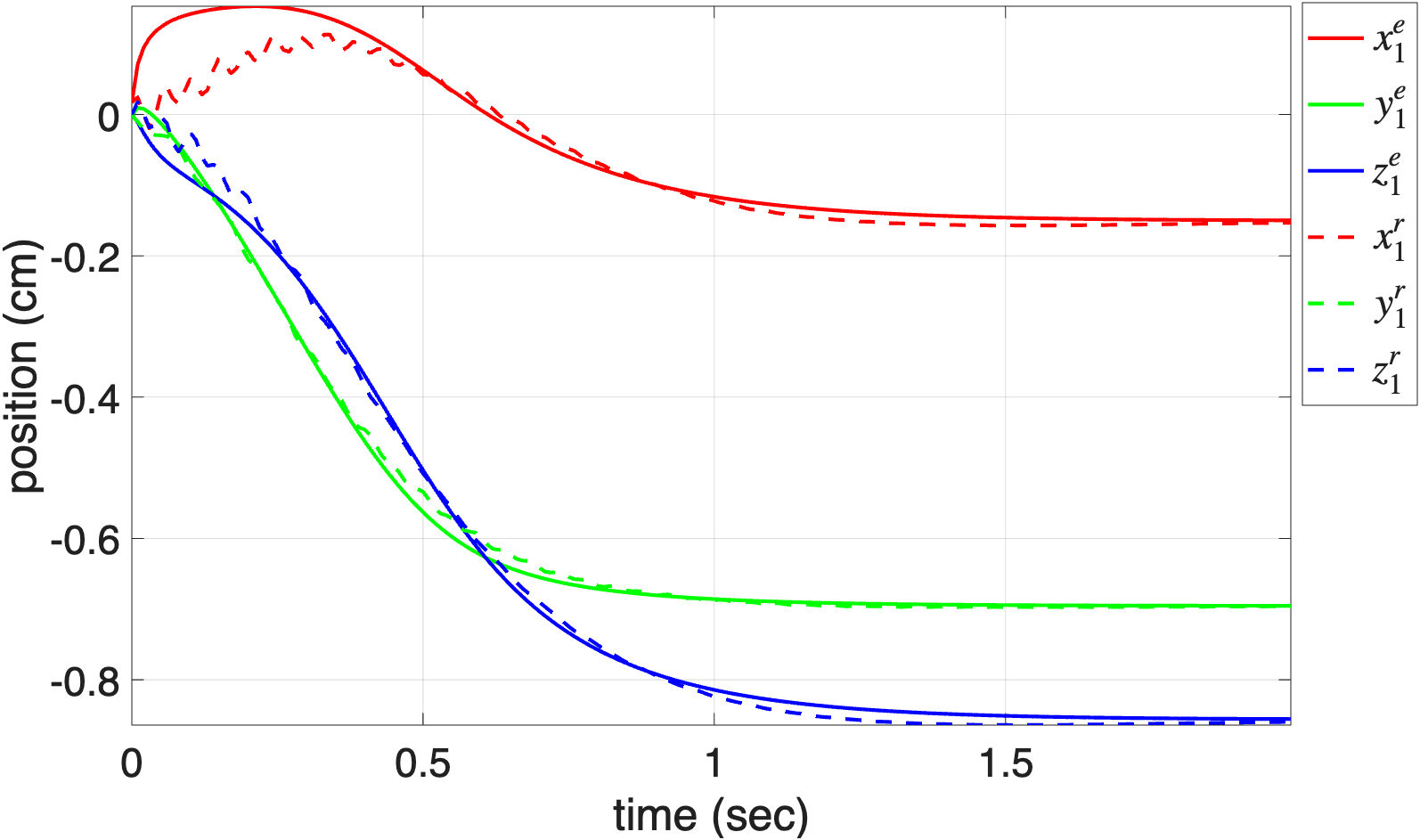}
    \caption{Comparison of coordinate history of trajectory two between calculated results (solid line) and reference data (dashed line) for Vertex 1 for two panel structure simulation with noise.}
    \label{fig:two_panel_noise}
\end{figure}
}

\subsection{Kresling Structure}  
The Kresling pattern is an origami structure that folds unidirectionally and exhibits bistability \cite{jianguo2015bistable}. When a continuous axial force, accompanied by a rotational torque, is applied along the central symmetric axis of the Kresling origami, it induces a unidirectional collapse or extension of the structure. All vertices move with uniform velocity along this axis at every moment, maintaining a synchronized folding motion.  

The Kresling pattern's bistable nature implies two stable equilibrium states, where the structure can maintain its configuration without additional external forces. The strain energy model governs the convergence to one of these equilibrium states, which depends on two key parameters at the initial unstable state: twist angle and height. We use the percentage of folding as a representative metric to simplify the characterization of the initial unstable states. If the initial folding percentage falls below a critical threshold, the structure stabilizes at state 1, corresponding to 0\% folded, as shown in Fig. \ref{f:pattern_param}({b}). In contrast, if the initial folding percentage exceeds this threshold, the structure settles at state 2, corresponding to 100\% folded, as shown in Fig. \ref{f:pattern_param}({c}). For a given Kresling origami design, this threshold can be determined on the basis of its geometric and material properties.  

\begin{figure}[!h]
\includegraphics[width=6.5cm]{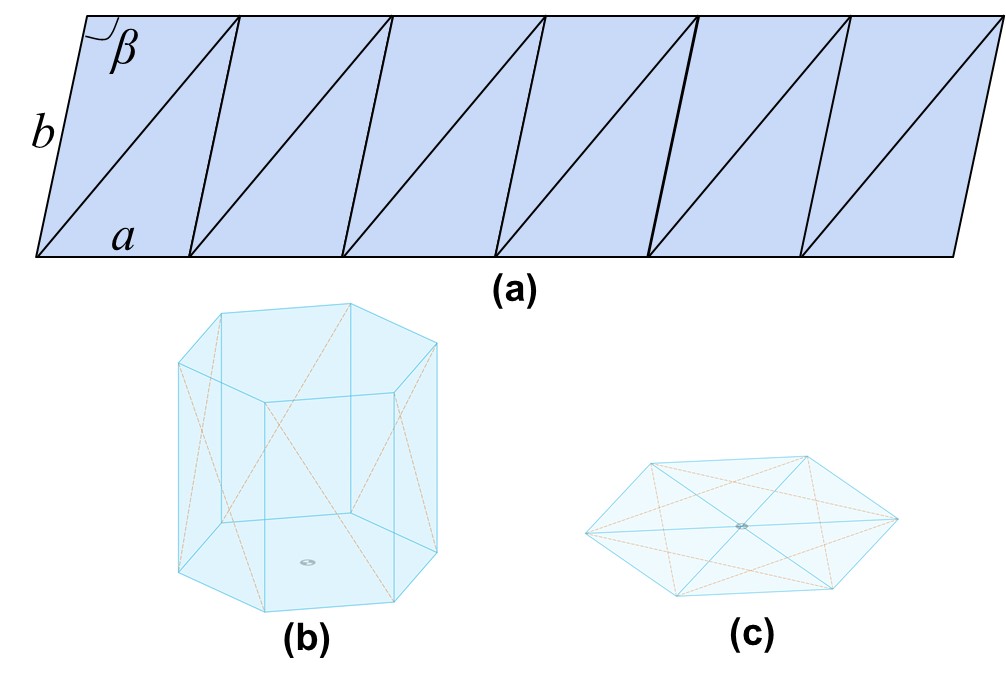}
\centering
\caption{An example of (a) Kresling pattern with parameters in their graph representations, (b) Equilibrium state 1 (0\% folded), and (c) Equilibrium state 2 (100\% folded).}\label{f:pattern_param}
\end{figure}

%\subsection{Panel and hinge Configurations of the Kresling Pattern}  
The Kresling pattern consists of uniform triangular panels that define the structural geometry. These panels are characterized by three fundamental parameters:  
The bottom length of the triangular panel is denoted as $ a$, the side length as $ b $, and the number of sides in the polygonal cross-section as $ n$. These parameters define the size and shape of each triangular panel within the structure, as shown in Fig. \ref{f:pattern_param}(a). %The stacked $\tilde{\mathbf{r}}_{i, j, k} = \begin{bmatrix} \tilde{\mathbf{r}}^{\top}_{i} & \tilde{\mathbf{r}}^{\top}_{j} & \tilde{\mathbf{r}}^{\top}_{k}\end{bmatrix}$.
Based on these geometric properties, the desired triangular configuration, $\tilde{\mathbf{r}}^{\text{panel}}_{i, j, k}=[\tilde{\mathbf{r}}^{\text{panel}\top}_i,\tilde{\mathbf{r}}^{\text{panel}\top}_j,\tilde{\mathbf{r}}^{\text{panel}\top}_k]^{\top}$, of the Kresling pattern can be formulated as  
\begin{subequations}
\begin{align}
    \tilde{\mathbf{r}}^{\text{panel}}_{i} =& \begin{bmatrix} 0 & 0 \end{bmatrix}^{\top}, \\  
    \tilde{\mathbf{r}}^{\text{panel}}_{j} =& \begin{bmatrix} a & 0 \end{bmatrix}^{\top}, \\  
    \tilde{\mathbf{r}}^{\text{panel}}_{k} =& \begin{bmatrix} -b \cos\beta & b \sin\beta \end{bmatrix}^{\top}, 
    \end{align}
\end{subequations}
where $ \beta$ is defined as
\begin{equation}
\beta = \frac{\pi}{n} + \arcsin\left(\frac{b}{a} \sin\left(\frac{\pi}{n}\right)\right).
\end{equation}

%\subsection{Hinge Configuration of the Kresling Pattern}
The hinge configuration in the Kresling pattern defines the relative angles between two adjacent panels during transitions between the folded and unfolded states, that allows panel deformations while maintaining the structural integrity of the origami pattern.
In the simulation, the system target state is set in a fully {folded and }unfolded configuration, i.e., the equilibrium state 1 in Fig. \ref{f:pattern_param}(b), where the target configurations for hinge consensus are determined.
There are two distinct hinge configurations in the Kresling pattern, each associated with the corresponding target formation, $\tilde{\mathbf{r}}_{i,k,l}^{\text{hinge1}}$ and \(\tilde{\mathbf{r}}_{i, j, l}^{\text{hinge2}}\), defined in \eqref{e:hinge_desired}. %. The three vertices of the hinge configuration are determined based on fully unfolded states to ensure that the structural parameters align with the geometric constraints. The desired configurations, $\tilde{\mathbf{r}}_{i,k,l}^{\text{hinge1}}$ and \(\tilde{\mathbf{r}}_{i, j, l}^{\text{hinge2}}\) defined in \eqref{e:hinge_desired}, are determined by the following components
For the configuration with $\phi=0$, $\mathbf{q}_{\iota, \kappa}$, $\iota,\kappa=i,j,k,{l},\,\iota\neq\kappa$ are defined by
\begin{subequations}
\begin{align}
    \mathbf{q}_{k, l} =& \mathbf{q}_{i, j} = \begin{bmatrix} -b \cos\beta & -b \sin\beta & 0 \end{bmatrix} ^\top, \\
       \mathbf{q}_{l, j} =& \begin{bmatrix} -a & 0 & 0\end{bmatrix} ^\top, \\
    \mathbf{q}_{k, j} =& \begin{bmatrix} -a-b \cos\beta & -b \sin\beta & 0 \end{bmatrix} ^\top.
\end{align}
\end{subequations}
Similarly, for the configuration with $\phi=\pi - \frac{(n-2)\pi}{n}$,  $\mathbf{q}_{\iota, \kappa}$, $\iota,\kappa = i,j,k,{l,}\,\iota\neq\kappa$ are defined by
%From these flat-state configurations, the hinge parameters \(\tilde{\mathbf{r}}_{i, k, l}^{\text{hinge1}}\) and \(\tilde{\mathbf{r}}_{i, j, l}^{\text{hinge2}}\) can be determined as described in \eqref{e:hinge_desired}. These calculated hinge states enable the Kresling pattern to undergo smooth transformations while adhering to the consensus-based dynamic model.
% Similarly, the second hinge type follows a corresponding set of configurations:
\begin{subequations}
\begin{align}
    \mathbf{q}_{k, l} =& \mathbf{q}_{i, j}=\begin{bmatrix} a & 0 & 0 \end{bmatrix} ^\top, \\
        \mathbf{q}_{l, j} =& \begin{bmatrix} - a - b\cos{\beta} & -b\sin{\beta} & 0\end{bmatrix} ^\top, \\
    \mathbf{q}_{k, j} =& \begin{bmatrix} -b \cos\beta & -b \sin\beta & 0 \end{bmatrix} ^\top.
\end{align}
\end{subequations}
The geometric dimensions of the Kresling pattern are set as $a=5$, $b=8.66$, and $n=6$.
% \begin{subequations}
% \begin{align}
%     \tilde{\mathbf{s}}_{i}^{hinge2} =& \begin{bmatrix} -b \cos\beta - a & c \sin\beta \end{bmatrix} ^\top, \\
%     \tilde{\mathbf{s}}_{j}^{hinge2} =& \begin{bmatrix} 0 & 0 \end{bmatrix} ^\top, \\
%     \tilde{\mathbf{s}}_{k}^{hinge2} =& \begin{bmatrix} -b \cos\beta + a & c \sin\beta \end{bmatrix} ^\top, \\
%     \tilde{\mathbf{s}}_{l}^{hinge2} =& \begin{bmatrix} a & 0 \end{bmatrix} ^\top.
% \end{align}
% \end{subequations}

%These hinge configurations serve as the fundamental elements in the Kresling pattern, allowing for controlled folding and unfolding motions. By employing Frame Projection Consensus control, the hinges facilitate a smooth and synchronized structural transformation, ensuring that the origami pattern adheres to its predefined dynamic constraints.

%This formulation provides a precise mathematical representation of the triangular panels in the Kresling pattern, facilitating its integration into dynamic network models for controlled folding and unfolding behavior.

%\subsection{Model and Parameter Selection}
To incorporate weighting parameters into the proposed FPCP, the parameter optimization problem formulated in \eqref{eq:design} is numerically solved to obtain optimal weights associated with each Laplacian matrix. For demonstration purposes, the observed reference data utilized in this simulation is generated from a geometric kinematic model, assuming the origami structure folds at a constant angular rotation speed \cite{lu2022conical}. To effectively capture and reflect diverse dynamic behaviors and enhance robustness, randomized time intervals, $\Delta t_{\kappa}$, are selected throughout the simulation. {In this simulation example, time variable $t$ here denotes a continuous-time evolution parameter associated with the consensus dynamics. Although it is not explicitly identified with physical actuation time, it serves as a time-like variable that parameterizes the progression of the system during the reconfiguration process.} 
%This variation ensures that the optimization framework can accommodate different material properties, leading to a robust and adaptable design.
%fulfill the required material properties, an optimization framework is applied, ensuring that the system adheres to the structural constraints and dynamic behavior introduced in Section \ref{s:weighted}. This approach refines the parameters to achieve optimal performance while maintaining the integrity of the origami structure. 

The optimized weights are integrated into the dynamic model, and the resulting simulated vertex trajectories are compared directly against the {two} reference trajectories from the geometric model {with one set of weights}. The simulated vertex coordinate histories using the optimized parameters and their corresponding reference trajectories are illustrated in {Figs.~\ref{fig:vertex_one_result1} and \ref{fig:vertex_one_result2}. Trajectory 1 in Fig. \ref{fig:vertex_one_result1} starts from an initially unstable, 25\%-folded configuration toward the equilibrium (0\%-folded), while trajectory 2 in Fig. \ref{fig:vertex_one_result2} starts from 75\%-folded configuration toward another equilibrium (100\%-folded) state.} As clearly depicted in these plots, the vertex positions predicted by the optimized dynamic model exhibit a close agreement with the reference data. {The trajectories generated using optimized weights closely reproduce the reference coordinate histories and terminal configurations for the selected geometric reconfiguration cases.}
% The dynamics generated using optimized weights accurately replicate both transient behaviors and equilibrium states observed in the reference trajectories.

%Compared to the observed data from a geometric model, the time histories of coordinates obtained from the dynamics with optimized parameters closely match the coordinate histories from the observed data. The mean squared error between the calculated results and observed data is summarized in Table ????   %demonstrating the system's response under the defined criteria.
\begin{figure}[!h]
    \centering
    \includegraphics[width=1\linewidth]{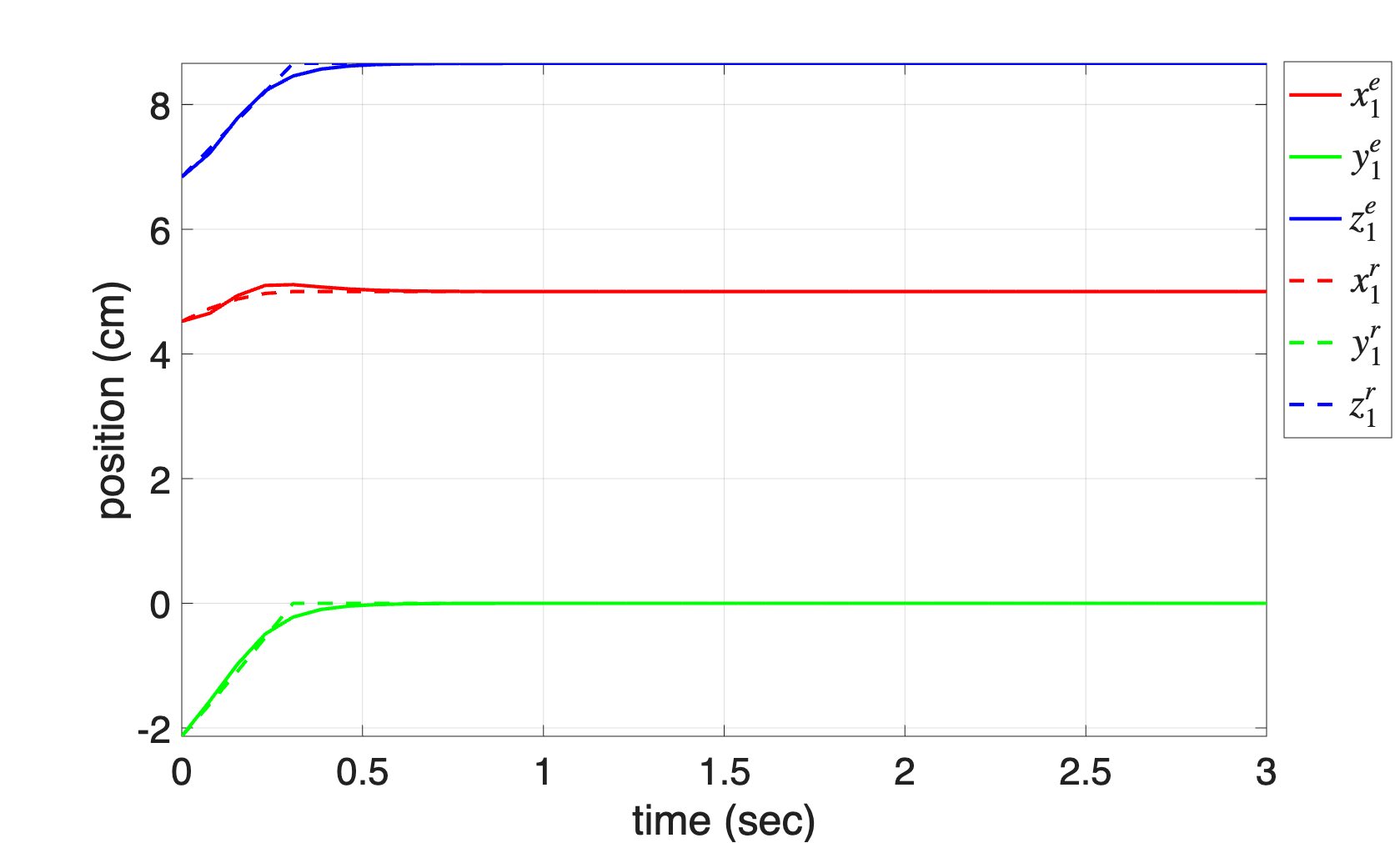}
    \caption{Comparison of coordinate history {of trajectory 1} between calculated results {(solid line)} and reference data {(dashed line)} for Vertex 1.}
    \label{fig:vertex_one_result1}
\end{figure}

\begin{figure}[t]
    \centering
    \includegraphics[width=\columnwidth]{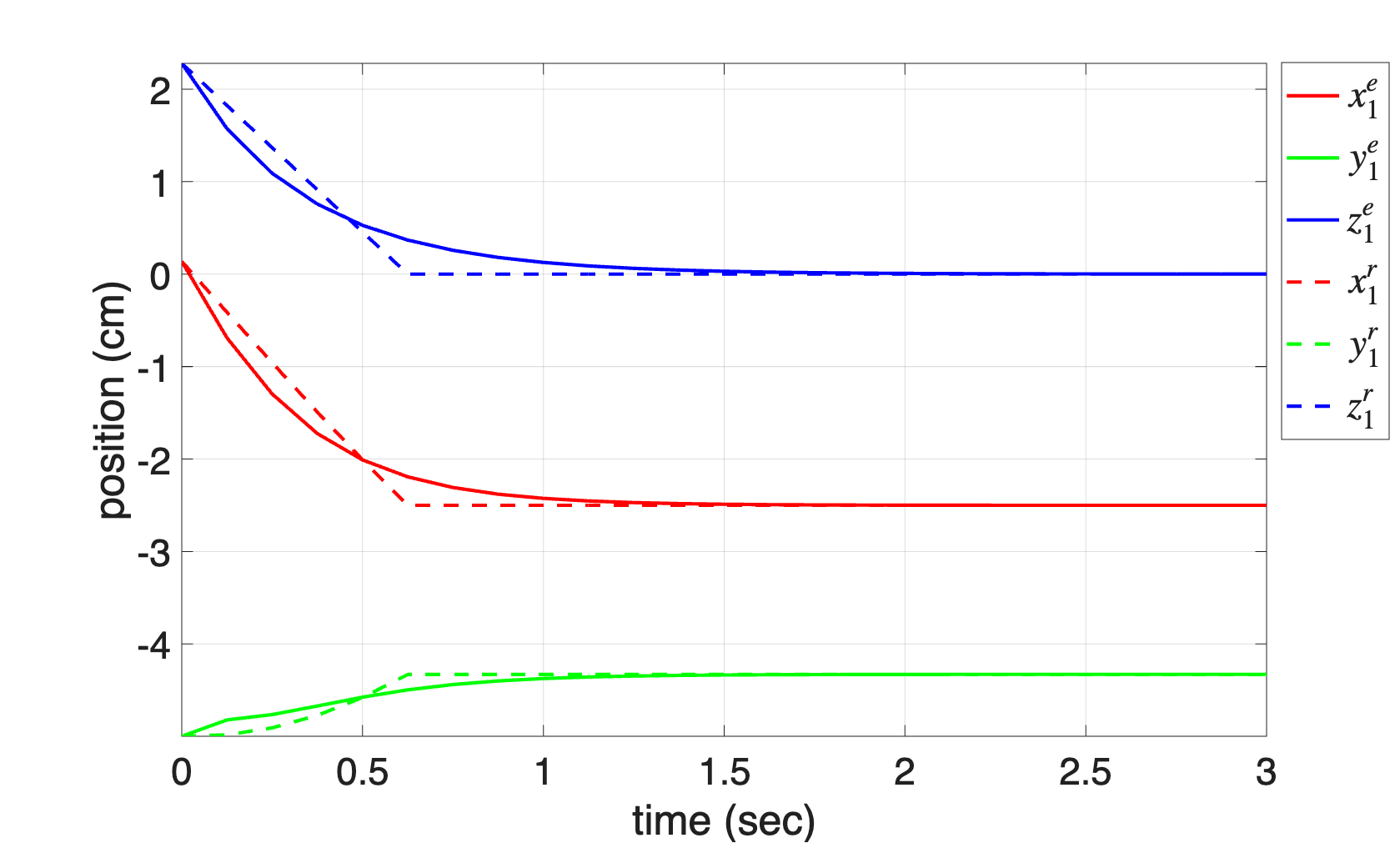}
    \caption{{Comparison of coordinate history of trajectory 2 between calculated results (solid line) and reference data (dashed line) for Vertex 1.}}
    \label{fig:vertex_one_result2}
\end{figure}

{To} quantitatively assess the accuracy of the parameter identification procedure, the mean squared errors (MSE) between the simulated vertex positions (obtained with optimized parameters) and the reference trajectories (obtained from the geometric model) are computed. These error metrics are summarized in Table~\ref{tab:optimization_mse}. 

\begin{table}[ht]
  \centering
  \begin{tabular}{|l|c|c|}
    \hline
     & Non-weighted FPCP & W-FPCP with optimized weights\\ \hline
    MSE & \iffalse 181.5791 \fi {129.4218} & \iffalse 0.0649 \fi  {0.3276}\\ \hline
  \end{tabular}
    \caption{MSE of non-weighted FPCP and W-FPCP relative to reference trajectory}\label{tab:optimization_mse}
\end{table}

Using the optimized parameters determined by the W-FPCP, Fig. \ref{fig:optimization_results} illustrates the dynamic unfolding/folding process of the Kresling origami pattern, respectively. Simulation results demonstrate that, under the W-FPCP with optimized weights, the origami structure naturally evolves {toward} the stable state 1 (fully unfolded) {or stable state 2 (fully folded)}. The simulated states closely match the reference states derived from the geometric kinematic model, {indicating that the proposed model accurately reproduces the observed reconfiguration trajectory}. The congruence between simulation results from the W-FPCP and the reference data {supports the effectiveness of} the proposed weighted consensus modeling framework {for representing origami reconfiguration behavior.}

{ The single optimized weight set in this example is interpreted as a compact effective parameterization for the selected Kresling reconfiguration trajectories, rather than a globally valid parameter set for all folding percentages.
Because the Kresling dynamics are nonlinear and branch-dependent, fitting the weights over the entire range of folding percentages can reduce trajectory-specific accuracy, while fitting only a short segment may degrade prediction outside that segment. This reflects an expressiveness limitation of using one constant set of W-FPCP weights. Developing piecewise multi-set weighting strategies for different folding regimes is left for future work.}
\vspace{-0.3cm}
\begin{figure}[!h]
    \centering
    \includegraphics[width=0.9\linewidth]{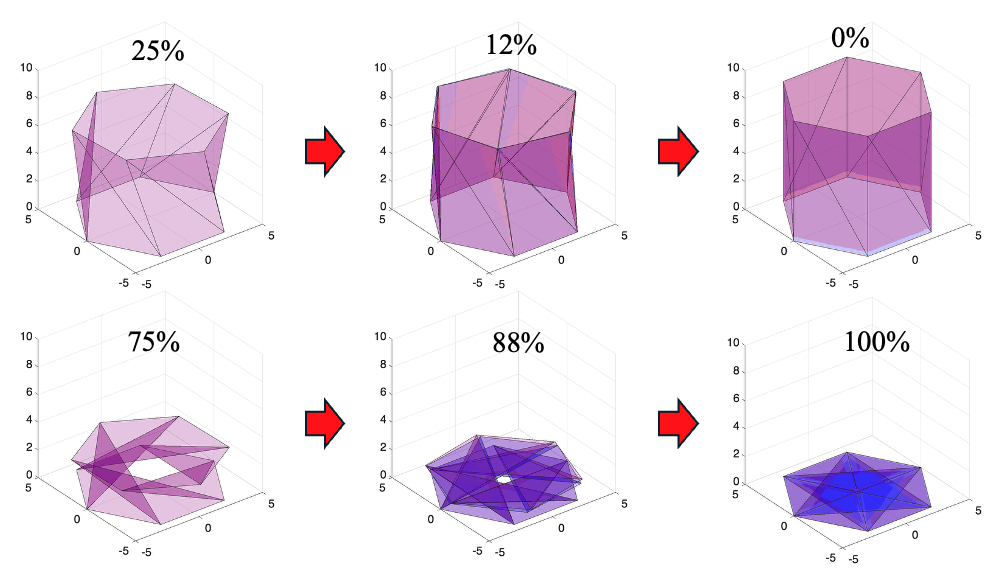}
    \caption{Reference (red) and calculated (blue) states of the Kresling pattern during the unfolding process {of trajectories 1 and 2}.}
    \label{fig:optimization_results}
\end{figure}

\section{Concluding Remarks}
This paper proposed the frame-projected consensus protocol (FPCP) to model the folding and unfolding dynamics of triangulated origami structures. By projecting each local vertex state onto a local two-dimensional frame of its panel or hinge plane before applying the consensus protocol, FPCP preserves the geometry of every panel throughout reconfiguration. The weighted extension of FPCP, named W-FPCP, is then proposed to better embed the physical attributes of the origami structure and {improve} the modeling fidelity. Theoretical analysis of the proposed method {to establish} its stability and convergence properties has also been provided. In addition, a data-fitting parameter estimation framework has been introduced to estimate model weights from observed vertex trajectories. {Numerical simulations on a two-panel structure and a Kresling pattern demonstrated that the proposed W-FPCP closely tracks the reference data and naturally converges to an equilibrium condition, even under initial perturbations. These results support the ability of the proposed framework to represent origami reconfiguration trajectories in agreement with the chosen geometric reference model.} Future work will extend the proposed framework to enable online parameter adaptation, {multi-set of parameter fitting}, and validation of the model experimentally on various origami prototypes.
\vspace{-0.4cm}
{
\section{Appendix I\\ Proof of Theorem 4.2}
\begin{proof}
Define $
\boldsymbol{\delta}
=
\tilde{M}_{\rm panel}\mathbf{x}_{i,j,k}
-
\tilde{\mathbf r}^{\text{panel}}_{i,j,k}$. Since
$
\tilde{M}_{\text{panel}}\tilde{M}_{\text{panel}}^{\top}
=
I_3\otimes(M_{\text{panel}}M_{\text{panel}}^{\top})
=
I_6,
$ the dynamics in \eqref{e:proj} implies
\[
\dot{\boldsymbol{\delta}}
=
\tilde{M}_{\text{panel}}\dot{\mathbf{x}}_{i,j,k}
=
-L_2\boldsymbol{\delta}.
\]
Thus, the projected error follows the standard formation-consensus
dynamics in the local 2D frame. Let
$
V(\boldsymbol{\delta})=\frac{1}{2}\|\boldsymbol{\delta}\|^2, 
$
then
\[
\dot V
=
\boldsymbol{\delta}^{\top}\dot{\boldsymbol{\delta}}
=
-\boldsymbol{\delta}^{\top}L_2\boldsymbol{\delta}
\le 0,
\]
where $L_2=L(K_3)\otimes I_2$ is symmetric positive semidefinite.
By LaSalle's invariance principle, $\boldsymbol{\delta}(t)$ converges to
the largest invariant set satisfying $L_2\boldsymbol{\delta}=0$. Since
$K_3$ is connected,
\[
\ker L_2
=
\left\{
(\mathbf{1}_3\otimes I_2)\mathbf c:
\mathbf c\in\mathbb{R}^2
\right\}.
\]
Therefore, $
\boldsymbol{\delta}(t)
\rightarrow
(\mathbf{1}_3\otimes I_2)\mathbf c
$ for some $\mathbf c\in\mathbb{R}^2$, or equivalently,
\[
\tilde{M}_{\text{panel}}\mathbf{x}_{i,j,k}(t)
\rightarrow
\tilde{\mathbf r}^{\text{panel}}_{i,j,k}
+
(\mathbf{1}_3\otimes I_2)\mathbf c.
\]
Taking the differences between any two vertices eliminates the translation
term and gives
\[
M_{\text{panel}}\bigl(\mathbf{x}_{\iota}(t)-\mathbf{x}_{\kappa}(t)\bigr)
\rightarrow
\tilde{\mathbf r}^{\text{panel}}_{\iota}
-
\tilde{\mathbf r}^{\text{panel}}_{\kappa},
\qquad
\iota, \kappa\in\{i,j,k\}.
\]
This proves convergence to the desired projected triangular formation,
up to a rigid translation in the local 2D frame.
\end{proof}}
\vspace{-0.4cm}
{
\section{Appendix II\\ Proof of Corollary 2.1}
\begin{proof}
The proof follows directly from Theorem \ref{the:local_c} by replacing the unweighted
Laplacian $L_2$ with the weighted Laplacian $\Omega_{i,j,k}$. Similarly, for
$
\boldsymbol{\delta}
=
\tilde{M}_{i,j,k}\mathbf{x}_{i,j,k}
-
\tilde{\mathbf r}_{i,j,k},
$ we have
$
\dot{\boldsymbol{\delta}}
=
-\Omega_{i,j,k}\boldsymbol{\delta},
$
where
$\tilde{M}_{i,j,k}\tilde{M}_{i,j,k}^{\top}=I_6$.
Since $\Omega_{i,j,k}$ is symmetric positive semidefinite, the 
W-PFCP for a single panel gives
\[
\boldsymbol{\delta}(t)
\rightarrow
\ker(\Omega_{i,j,k}).
\]
Using the null-space condition in \eqref{eq:omega_nullspace}, there
exists $\mathbf c\in\mathbb{R}^2$ such that
$
\boldsymbol{\delta}(t)
\rightarrow
(\mathbf{1}_3\otimes I_2)\mathbf c$.
Similar to Theorem \ref{the:local_c}, we have
\[
M_{i,j,k}\left(\mathbf{x}_{\iota}(t)-\mathbf{x}_{\kappa}(t)\right)
\rightarrow
\widetilde{\mathbf r}_{\iota}-\widetilde{\mathbf r}_{\kappa},
\qquad
\iota, \kappa\in\{i,j,k\}.
\]
This completes the proof.
\end{proof}}
\vspace{-0.4cm}
{
\section{Appendix III\\ Proof of Theorem 4.5}
\begin{proof}
Using the definitions of $H_q$ and $\mathbf{g}_q$, the dynamics on
$[t_q,t_{q+1})$ can be written as
\[
    \dot{\mathbf{x}}
    =
    -H_q\mathbf{x}+\mathbf{g}_q.
\]
By the common-equilibrium condition \eqref{eq:common_equilibrium},
$H_q\mathbf{x}^\ast=\mathbf{g}_q$ for every $q$. Defining
$\mathbf{e}(t)=\mathbf{x}(t)-\mathbf{x}^\ast$,
we obtain the piecewise-linear error dynamics
\[
    \dot{\mathbf{e}}=-H_q\mathbf{e},
    \qquad t\in[t_q,t_{q+1}).
\]
From Corollary \ref{t:WequalOmega}, $\hat{W}_{\Gamma,q}$, $\Gamma=1,\ldots,m+2h$, is positive semidefinite. Hence, each
$H_q$ is positive semidefinite. Moreover, by definition,
$H_q\mathbf{n}=0$ for every $\mathbf{n}\in\mathcal N$ and every $q$.
Since $\mathcal N^\perp$ is invariant under $H_q$, the error component
$\mathbf{e}_\perp:=P_{\mathcal N^\perp}\mathbf{e}
$ satisfies
$\dot{\mathbf{e}}_\perp=-H_q\mathbf{e}_\perp$. 
Consider
$
    V(\mathbf{e}_\perp)=\frac{1}{2}\|\mathbf{e}_\perp\|^2.
$ For $t\in[t_q,t_{q+1})$,
\[
    \dot V
    =
    \mathbf{e}_\perp^\top \dot{\mathbf{e}}_\perp
    =
    -\mathbf{e}_\perp^\top H_q\mathbf{e}_\perp.
\]
Using \eqref{eq:uniform_definiteness}, we obtain
\[
    \dot V
    \le
    -\mu\|\mathbf{e}_\perp\|^2
    =
    -2\mu V.
\]
Therefore, $
    V(t)\le e^{-2\mu t}V(0)$.
Hence, on each interval,
\[
    V(t)
    \le
    e^{-2\mu(t-t_q)}V(t_q),
    \qquad t\in[t_q,t_{q+1}).
\]
Since $\mathbf e_\perp(t)$ is continuous at
each switching time $t_q$, the value of the common Lyapunov function does not jump, i.e., $
    V(t_q^+)=V(t_q^-)$.
Putting the interval inequalities in sequences gives
\[
    V(t)\le e^{-2\mu(t-t_0)}V(t_0),
    \qquad t\ge t_0.
\]
Consequently,
\[
    \|\mathbf e_\perp(t)\|
    \le
    e^{-\mu(t-t_0)}\|\mathbf e_\perp(t_0)\|,
\]
which proves exponential convergence of the component orthogonal to
$\mathcal N$.
\end{proof}}

\bibliographystyle{IEEEtran}
\bibliography{main_bib}
\vspace{-1.3cm}
\end{document}